\documentclass[12pt,a4paper]{amsart}
\usepackage[utf8]{inputenc}
\usepackage[sc,osf]{mathpazo}
\usepackage[scaled=0.2]{beramono}
\usepackage[euler-digits]{eulervm}
\usepackage{mathrsfs}
\usepackage{amssymb}
\usepackage{amsfonts}
\usepackage{dsfont}
\usepackage{mathtools}
\usepackage{xfrac}

\usepackage[a4paper,margin=2.5cm,centering]{geometry}
\usepackage[pdfdisplaydoctitle,colorlinks,breaklinks,urlcolor=blue,linkcolor=blue,citecolor=blue]{hyperref}
\usepackage[nameinlink,capitalise]{cleveref}
\newtheorem{theorem}{Theorem}[section]
\newtheorem{lemma}[theorem]{Lemma}
\newtheorem{proposition}[theorem]{Proposition}
\newtheorem{corollary}[theorem]{Corollary}

\newtheorem{conjecture*}{Conjecture}
\theoremstyle{definition}
\newtheorem{definition}[theorem]{Definition}
\newtheorem{assumption}[theorem]{Assumption}
\theoremstyle{remark}
\newtheorem{remark}[theorem]{Remark}
\newtheorem*{remark*}{Remark}
\numberwithin{equation}{section}

\newcommand{\one}{\mathbb{1}}

\newcommand{\norm}[1]{\left\lVert #1 \right\rVert}
 
\newcommand{\R}     {\mathbb{R}} 
 
\newcommand{\N}     {\mathbb{N}}

\newcommand{\E}     {\mathbb{E}}

\newcommand{\bX}    {{\mathbf X }}
\newcommand{\bY}    {{\mathbf Y }}
\newcommand{\bS}    {{\mathbf S }}

\newcommand{\Fcal}   {{\mathcal F }}

\newcommand{\Kcal}   {{\mathcal K }}

\newcommand{\Ncal}   {{\mathcal N }}

\newcommand{\Rcal}   {{\mathcal R }}

\newcommand{\Wcal}   {{\mathcal W }}

\newcommand{\Zcal}   {{\mathcal Z }}

\usepackage{xcolor}
\usepackage[obeyFinal,textwidth=2.7cm]{todonotes}
\usepackage{framed}
\definecolor{mrshadecolor}{rgb}{.9,.9,.9}

\begin{document}
\title[Universality in deep neural networks via the Lindeberg principle]{Universality in Deep Neural Networks:\\
An approach via the Lindeberg exchange principle}
\author{Filippo Giovagnini, Sotirios Kotitsas, Marco Romito}
\begin{abstract}
    We consider the infinite-width limit of a fully connected deep neural network with general weights, and we prove quantitative general bounds on the $2$-Wasserstein distance between the network and its infinite-width Gaussian limit, under appropriate regularity assumptions on the activation function. Our main tool is a Lindeberg principle for Deep Neural Networks, which we use to successively replace the weights on each layer by Gaussian random variables.
\end{abstract}
\maketitle
\tableofcontents


\section{Introduction}

Neural networks have become nowadays the central engines of modern technology, driving the most recent scientific breakthroughs, as well as reshaping how we work and create. While their empirical success is now well established, there is still a significant amount of work to be done to achieve a complete theoretical understanding of their properties. Randomly initialized networks are just as vital, acting as both a practical "blank slate" that allows neurons to learn distinct features and a theoretical gold mine for researchers studying the mathematical foundations of how deep learning actually works. 

The main subject of this paper is the analysis of fully connected deep neural networks, where weights and biases are random variables, and the interest is in their behavior as the widths of the hidden layers tend to infinity. In this regime, random neural networks are expected to display Gaussian behavior: under the standard mean-field scaling of the weights, the output of the network converges to a Gaussian process whose covariance is determined recursively by the architecture and the activation function. This infinite-width description provides a tractable probabilistic model for wide networks and has become a basic tool in the analysis of random neural networks.

Neal's seminal work \cite{neal1996priors} laid the groundwork for the subject by proving the convergence of randomly initialised shallow wide fully connected networks to Gaussian processes. Deep networks were later covered by \cite{lee2018deep,matthews2018gaussian,yang_2019_tensor,yang2019scaling}, with weaker assumptions \cite{Hanin2023}, or with a functional perspective in \cite{BracaleFavaroFortiniPeluchetti2021}. 

Further results focused on quantitative bounds for convergence of randomly initialised networks with Gaussian weights. The first quantitative proof of convergence, with Gaussian initialised weights, was given in \cite{basteri_2024_quantitative}, in Wasserstein-2 distance. The role of the depth-to-width ratio in the convergence to the neural network Gaussian process was introduced in \cite{randomfullyconnectedneuralcumulants}. Further results of quantitative convergence were obtained in \cite{apollonio_2024_normal}, with total variation and Wasserstein-1 distances. Improved convergence rates were obtained in \cite{MR4898096} for Wasserstein-2 distance, and later in \cite{trevisan2023widedeepneuralnetworks} for Wasserstein-$p$ distances. A recent extension to the total variation metric is given in \cite{entr_bounds_cond_gaussian}. Quantitative convergence to Gaussian processes in the infinite width limit turns out to be stable also for trained shallow neural networks \cite{agazzi_2026_quantitative}.

\cite{Hanin2023} provides the first proof of universality of the neural network Gaussian process, namely of convergence of randomly initialised networks with \emph{non-Gaussian} weights. The proof in \cite{Hanin2023} is qualitative, earlier quantitative results of convergence were obtained in \cite{eldan_2021_non,klukowski_2022_rate}. More refined rates of convergence in Wasserstein-1 are given in \cite{balasubramanian_2024_gaussian}, and improved in \cite{Unfortunate_paper} and \cite{celli2026wideneuralnetworksgeneral}. We refer to \cite{celli2026wideneuralnetworksgeneral} for a detailed literature review on the subject, with overviews on both Gaussian and non-Gaussian weights.

The main goal of this paper is to study quantitative rates of convergence for universality of random neural networks. More precisely, we prove explicit bounds on the distance between the output of a finite-width network and its infinite-width Gaussian limit in the $2$-Wasserstein distance. Our approach is based on the Lindeberg exchange principle. Roughly speaking, the weights of the network are successively replaced by Gaussian weights with matching first and second moments and the cumulative error along this switching procedure is carefully estimated. The additional difficulty is that a naive application of the classical Lindeberg method leads to dimension-dependent bounds that are too large in our setting. The key point is that the recursive structure of the network provides additional smoothing, which can be exploited to obtain better estimates.

In contrast to the recent results \cite{Unfortunate_paper} and \cite{celli2026wideneuralnetworksgeneral}, which focus on the 1-Wasserstein, total variation or convex metrics under Lipschitz assumptions on the activation function, our work establishes quantitative bounds in the 2-Wasserstein distance. While our approach necessitates higher regularity for the activation function -- specifically $C^3_b$ in the presence of biases and even higher-order derivatives for the zero-bias case -- we obtain a convergence rate of approximately $O(n^{-1/4})$ ($n$ is layers width), which is more robust than \cite{Unfortunate_paper}, although is slower than the rate obtained in \cite{celli2026wideneuralnetworksgeneral}. This provides a stronger metric of convergence than the $W_1$ results in \cite{Unfortunate_paper} and \cite{celli2026wideneuralnetworksgeneral}, while requiring only finite third moments for the hidden layer weights. Additionally, in the presence of Gaussian biases we are able to recover bounds in the total variation and \(1-\)Wasserstein distance establised in \cite{celli2026wideneuralnetworksgeneral}, with optimal rates.
\bigskip

The rest of the paper is organized as follows: In \textbf{Section \ref{sec:Setup_plus_results}} we present our assumptions and our main results, summarized in \textbf{Theorems \ref{thm:W_2_bound}}, \textbf{\ref{thm:Improved_W_2_bound}} and \textbf{\ref{thm:main-weak-bound}}, while in \textbf{Section \ref{sec:Proof_sketch}} we give a rough outline of our arguments. In \textbf{Section \ref{sec:Lindeberg_bounds}} we prove our main technical estimates and prove \textbf{Theorem \ref{thm:main-weak-bound}}, while in \textbf{Section \ref{sec:W_2_bounds_proofs}} we prove \textbf{Theorems \ref{thm:W_2_bound}} and \textbf{\ref{thm:Improved_W_2_bound}}. Finally, there are two appendices that contain numerical simulations on the distance of the neural network from its infinite width limit, and a proof of a bound on the \(2-\)Wasserstein distance between conditionally independent Gaussian random variables, which we include for completeness.

\section{The setup and main results}\label{sec:Setup_plus_results}

We begin by introducing the model.

\begin{definition}\label{def:Deep_Neural_Network}
    Fix \(L\in\N\) and \(n_0,\dots,n_{L+1}\in\N\). A \textbf{fully connected deep neural network} is a function
    \[
        z^{(L+1)}:\R^{n_0}\to\R^{n_{L+1}}
    \]
    defined recursively by
    \begin{equation}\label{eq:network_recursion}
        z^{(\ell)}=\begin{cases}
                    W^{(1)}x+b^{(1)}, & \ell=1,\\
                    W^{(\ell)}\sigma(z^{(\ell-1)})+b^{(\ell)}, & \ell=2,\dots,L+1.
                \end{cases}
    \end{equation}
    The matrices \(W^{(\ell)}\in\R^{n_l\times n_{l-1}}\) are called \textbf{weights}, and the vectors \(b^{(\ell)}\in\R^{n_l}\) are called \textbf{biases}. Here \(\sigma:\R\to\R\) is the activation function, and for a vector \(x\in\R^d\) we write
    \[
        \sigma(x)=(\sigma(x_1),\dots,\sigma(x_d))\in\R^d.
    \]
    The integer \(n_0\) is the input dimension and \(n_{L+1}\) is the output dimension of the network. We refer to \(z^{(L+1)}\) as the output of the network.
\end{definition}

Following \cite{MR4898096,randomfullyconnectedneuralcumulants}, we consider \textbf{random} neural networks, in the sense that the weights and biases are random. We study the behavior of the random output \(z^{(L+1)}\) as the widths of the hidden layers tend to infinity, that is, as \(n_1,\dots,n_L\to\infty\).

To obtain a non-trivial limit, one needs to impose some assumptions on the weights and biases.

\begin{assumption}\label{assum:weights}
    Let \(\mu\) be a probability measure on \(\R\) with mean zero, variance \(1\), and finite third moments. For \(\ell=2,\dots,L+1\), we choose the weights as
    \begin{equation}\label{eq:weight_assum}
        w^{(\ell)}_{i,j}\sim \sqrt{\frac{C_W}{n_{\ell-1}}}\cdot \mu,
    \end{equation}
    where \(C_W>0\), and all these weights are sampled independently. For \(\ell=1\), the weights \(w_{i,j}^{(1)}\) are all independent Gaussian random variables, independent also of the weights in the subsequent layers, with mean \(0\) and variance \(C_w/n_0\).
\end{assumption}

The assumptions on the biases are

\begin{assumption}\label{assum:biases}
    The biases are Gaussian and i.i.d.. In particular
    \begin{equation}\label{eq:Gaussian_biases}
    b^{(\ell)}\sim \Ncal(0,c_b I_{n_l\times n_l}),
\end{equation}
where \(c_b>0\).
\end{assumption}

Under these assumptions, the infinite-width limit of the network is well understood \cite{Hanin2023}: as \(n_1,\dots,n_L\to\infty\), the output converges to a Gaussian process. In particular

\begin{theorem}\label{thm:qualitative_thm}
    Let \(L,n_0,n_{L+1},r\in\N\) be fixed, and assume that \(\sigma\) is  polynomially bounded
    to order \(r\) in the sense of \cite[Definition \(2.1\)]{MR4898096}. Under \textbf{Assumptions \ref{assum:weights}} and \textbf{\ref{assum:biases}}, the process
    \[
        x\mapsto z^{(L+1)}(x)
    \]
    converges weakly in \(C^{r-1}(\R^{n_0},\R^{n_{L+1}})\) to a Gaussian process
    \[
        x\mapsto \Zcal^{(L+1)}(x),
    \]
    whose \(n_{L+1}\) components are i.i.d.\ and whose covariance structure is defined recursively by
    \[
        K^{(\ell+1)}(x,y)=
        \begin{cases}
            c_b+C_W \E[\sigma(\Zcal^{(\ell)}(x))\sigma(\Zcal^{(\ell)}(y))], & \ell\geq 1,\\[0.3em]
            c_b+\dfrac{C_W}{n_0}x\cdot y, & \ell=0.
        \end{cases}
    \]
    The same convergence is true when the DNN \(z^{(L+1)}\) has zero biases and weights satisfying \textbf{Assumption \ref{assum:weights}}. The limiting Gaussian process is defined recursively as above, with \(c_b=0\).
\end{theorem}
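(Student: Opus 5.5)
The statement is the qualitative universality result of \cite{Hanin2023}. Within the present framework I would prove it in two stages: convergence of finite-dimensional distributions, then tightness in \(C^{r-1}\). For the first stage, fix inputs \(x_1,\dots,x_k\) and argue by induction on the layer index \(\ell\). Let \(\Fcal^{(\ell-1)}\) denote the \(\sigma\)-algebra generated by the first \(\ell-1\) layers. Conditionally on \(\Fcal^{(\ell-1)}\), the rows of \(W^{(\ell)}\) are independent, so the neurons \(z^{(\ell)}_i(x_a)\) are conditionally i.i.d. in \(i\). Each neuron equals
\[
\sum_j w^{(\ell)}_{ij}\,\sigma\bigl(z^{(\ell-1)}_j(x_a)\bigr)+b^{(\ell)}_i,
\]
a sum of \(n_{\ell-1}\) independent terms. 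Its conditional covariance is
\[
c_b+\frac{C_W}{n_{\ell-1}}\sum_j\sigma\bigl(z^{(\ell-1)}_j(x_a)\bigr)\,\sigma\bigl(z^{(\ell-1)}_j(x_b)\bigr).
\]

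Next I would apply a conditional Lindeberg exchange. This replaces the weights of layer \(\ell\) one by one with Gaussians of matching variance, for a fixed smooth test function \(f\) of finitely many output neurons. The second-order terms cancel because the first two moments match. The third-order remainder is bounded by
\[
\E|\mu|^3\,\|f\|_{C^3}\,n_{\ell-1}^{-3/2}\sum_j\bigl|\sigma\bigl(z^{(\ell-1)}_j\bigr)\bigr|^3.
\]
Its expectation is \(O(n_{\ell-1}^{-1/2})\), provided the third moments of \(\sigma(z^{(\ell-1)}_j)\) are uniformly bounded. I would obtain that bound by induction, using the polynomial growth of \(\sigma\) and the standard fact that hypercontractivity-free moment bounds propagate through the layers when \(\mu\) has finite third moment. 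If higher moments are needed, I would first truncate \(\mu\) and then remove the truncation. After the exchange, the layer \(\ell\) is conditionally Gaussian with the empirical covariance above.

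It then remains to show that the empirical covariance converges in probability to \(K^{(\ell)}(x_a,x_b)\). This is a law of large numbers. The summands \(\sigma(z^{(\ell-1)}_j(x_a))\,\sigma(z^{(\ell-1)}_j(x_b))\) are exchangeable in \(j\) and asymptotically uncorrelated. By the induction hypothesis, any finite collection of them converges to i.i.d. copies of \(\sigma(\Zcal^{(\ell-1)})\), and uniform integrability follows from the moment bounds. A second-moment computation, using this asymptotic decorrelation of pairs of neurons, then gives the convergence. Continuity of Gaussian laws in their covariance closes the induction. In the zero-bias case the argument is the same with \(c_b=0\), except that the limiting covariance may be degenerate; this affects nothing at the level of weak convergence.

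For tightness in \(C^{r-1}(\R^{n_0},\R^{n_{L+1}})\), I would use a Kolmogorov–Chentsov criterion on compacts. This requires moment bounds of the form
\[
\E\bigl|\partial^\alpha z^{(L+1)}(x)-\partial^\alpha z^{(L+1)}(y)\bigr|^p\lesssim |x-y|^{p\gamma}
\]
for \(|\alpha|\le r-1\), uniformly in the widths, obtained by differentiating the recursion and using the polynomial bounds on derivatives of \(\sigma\) to order \(r\). The main obstacle is here: with only third moments on \(\mu\), one cannot directly get \(p\)-th moments with \(p>n_0\). I would circumvent this as in \cite{Hanin2023} by conditioning on the previous layer. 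Given \(\Fcal^{(\ell-1)}\), the increment is a weighted sum of i.i.d. weights. Rosenthal/Burkholder-type inequalities, combined with the \(1/\sqrt{n}\) scaling, then yield the needed moments up to an error that vanishes as \(n\to\infty\). If this fails, I would again fall back on truncation of the weights, which is harmless since the truncated network is close in probability.
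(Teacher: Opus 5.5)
The architecture you describe (conditional Lindeberg exchange layer by layer, a law of large numbers for the empirical covariance via exchangeability and asymptotic decorrelation, then Kolmogorov--Chentsov for tightness) is the standard one. The paper gives no proof of this statement; it simply quotes \cite{Hanin2023}. The genuine gap is the moment bookkeeping, and under the hypothesis you are working with (only $\E|\hat w|^3<\infty$ for $\hat w\sim\mu$) it cannot be closed.

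Your ``standard fact'' that moment bounds propagate through the layers is false. Conditionally on layer $1$, $z^{(2)}_j=\sum_i w^{(2)}_{ji}\sigma(z^{(1)}_i)+b^{(2)}_j$ is a sum of independent centred terms. The lower Marcinkiewicz--Zygmund/Rosenthal bound then gives $\E|z^{(2)}_j|^p\ge c_p\sum_i\E|w^{(2)}_{ji}\sigma(z^{(1)}_i)|^p=\infty$ whenever $\mu$ has no $p$-th moment. So if $|\sigma(t)|\gtrsim|t|^k$ at infinity and $\mu$ has no moment of order $3k$, then $\E|\sigma(z^{(2)}_j)|^3=\infty$ at every finite width. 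The remainder in your Lindeberg step at layer $3$ is then infinite.

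Truncating $\mu$ does not rescue this, because the truncation cannot be removed uniformly in the widths. In fact the conclusion itself fails. Take $L=2$, $\mu$ symmetric with $\mathbb{P}(|\hat w|>s)\asymp s^{-\alpha}$ for some $3<\alpha<4$, and $\sigma$ smooth with $\sigma(t)=t^2$ for $|t|\ge1$; this $\sigma$ is polynomially bounded to every order.
\begin{itemize}
\item On a high-probability event for layer $1$, the $z^{(2)}_j$ are conditionally i.i.d.\ with a one-big-jump tail $\mathbb{P}(|z^{(2)}_j|>t\mid z^{(1)})\gtrsim n_1^{1-\alpha/2}t^{-\alpha}$ for $t\gtrsim1$.
\item Hence $Y_j:=\sigma(z^{(2)}_j)^2$ satisfies $\mathbb{P}(Y_j>u\mid z^{(1)})\gtrsim n_1^{1-\alpha/2}u^{-\alpha/4}$, with $\alpha/4<1$.
\item The conditional variance of the output, $c_b+\frac{C_W}{n_2}\sum_jY_j\ge\frac{C_W}{n_2}\max_jY_j$, therefore tends to infinity in probability as soon as $n_2^{1-\alpha/4}n_1^{1-\alpha/2}\to\infty$, i.e.\ $n_2\gg n_1^{(2\alpha-4)/(4-\alpha)}$.
\item Conditionally, $z^{(3)}(x)-b^{(3)}$ is a centred sum with diverging variance and bounded normalised third moment, so along such widths $z^{(3)}(x)$ is not even tight.
\end{itemize}

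Your tightness fallback has the same defect. Rosenthal's inequality bounds $\E|\sum_jX_j|^p$ by a quantity containing $\sum_j\E|X_j|^p$, which is infinite when $\E|\hat w|^p=\infty$. So it cannot produce moments of order higher than those of $\mu$, and the ``harmless'' truncation is exactly what the example rules out.

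The missing ingredient is the hypothesis under which \cite{Hanin2023} actually proves the theorem: $\mu$ has finite moments of all orders. With it, condition on the previous layer and apply Rosenthal's inequality inductively. For every $p$ this gives bounds, uniform in $n_1,\dots,n_L$, on $\E|\partial^\beta_x z^{(\ell)}_j(x)|^p$ and on the corresponding increments for $|\beta|\le r-1$, using the polynomial bounds on the derivatives of $\sigma$ up to order $r$. Your finite-dimensional induction and Kolmogorov--Chentsov with $p>n_0$ then go through essentially as in that reference.

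For bounded $\sigma$, your finite-dimensional argument does work with third moments only. It is the superlinear growth allowed by polynomial boundedness that breaks it.
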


Our main focus is on the speed of convergence of \(z^{(L+1)}\) to \(\Zcal^{(L+1)}\) in suitable probability metrics. In the case of Gaussian weights, one can use techniques from Gaussian analysis and Stein's method to derive quantitative estimates in total variation and in \(p\)-Wasserstein distance; see, for instance, \cite{MR4898096,entr_bounds_cond_gaussian}. For general weight distributions with sufficiently many moments, quantitative limit theorems in total variation and \(1\)-Wasserstein distance were obtained in \cite{celli2026wideneuralnetworksgeneral}. Here we are interested in obtaining bounds in the \(2\)-Wasserstein distance, and we consider two distinct cases: when the network has zero biases and when the network has biases satisfying \textbf{Assumption \ref{assum:biases}}. In the first case, the main result is the following estimate.

\begin{theorem}\label{thm:W_2_bound}
    Let \(\sigma\in C_b^{3\cdot 2^{L-1}}(\R)\), and consider a DNN of depth \(L\) with weights as in \textbf{Assumption \ref{assum:weights}} and zero biases. Let \(\Zcal^{(L+1)}(x)\) be the zero-bias Gaussian process from \textbf{Theorem \ref{thm:qualitative_thm}}, and assume that 
    \(K^{(\ell)}(x)\) is invertible for \(\ell=1,\ldots,n_{L+1}\). Then
    \[
         \Wcal_2\bigl(z^{(L+1)}(x),\Zcal^{(L+1)}(x)\bigr)
         \leq C\biggl(\frac{1}{\sqrt{n_L}}+\Bigl( \sum_{k=1}^{L-1}\frac{1}{\sqrt{n_k}} \Bigr)^{1/4}\biggr),
    \]
    for some constant \(C= C(L,n_{L+1},\sigma)\).
\end{theorem}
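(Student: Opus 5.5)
We plan to bound $\Wcal_2(z^{(L+1)}(x),\Zcal^{(L+1)}(x))$ by splitting it into two pieces. The first compares $z^{(L+1)}$ with a Gaussian mixture: conditionally on the first $L$ layers, $z^{(L+1)}$ is a sum of $n_L$ independent terms $w^{(L+1)}_{ij}\sigma(z^{(L)}_j)$. The second compares that mixture with $\Zcal^{(L+1)}$.

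\textbf{Step 1 (last layer).} Conditionally on $\Fcal_L=\sigma(W^{(1)},\dots,W^{(L)})$, the output components are independent sums of independent centred variables, each with third moment of order $n_L^{-3/2}$. We apply a conditional $\Wcal_2$ CLT with constants controlled by third moments, for instance via Stein's method or the Lindeberg exchange with smooth test functions followed by a smoothing argument. This should give $\Wcal_2(z^{(L+1)}\mid\Fcal_L,\ \Ncal(0,\Sigma_L))\lesssim n_L^{-1/2}$. Here $\Sigma_L=\frac{C_W}{n_L}\|\sigma(z^{(L)})\|^2 I$, and the constant uses that $\sigma$ is bounded. Averaging over $\Fcal_L$ yields the term $C/\sqrt{n_L}$.

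\textbf{Step 2 (Gaussian mixture vs limit).} We use the appendix bound on $\Wcal_2$ between conditionally Gaussian vectors: for centred Gaussians,
\[
\Wcal_2\bigl(\Ncal(0,\Sigma),\Ncal(0,K)\bigr)\le \|\Sigma^{1/2}-K^{1/2}\|_{HS}\le C\,\|\Sigma-K\|^{1/2}.
\]
This square root is the source of the exponent loss, and invertibility of $K^{(L+1)}$ lets us replace it by a Lipschitz bound wherever possible. It then suffices to control
\[
\E\bigl|\tfrac{C_W}{n_L}\textstyle\sum_j\sigma(z^{(L)}_j)^2-K^{(L+1)}(x)\bigr|.
\]
We split this into a law-of-large-numbers part, which is of order $n_L^{-1/2}$ once we condition on $\Fcal_{L-1}$ and use the conditional independence of the neurons, and a part of the form $\E[\sigma^2(z^{(L)}_1)]-\E[\sigma^2(\Zcal^{(L)})]$.

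\textbf{Step 3 (Lindeberg induction).} We bound the weak distance $|\E\varphi(z^{(\ell)}_1)-\E\varphi(\Zcal^{(\ell)})|$ for $\varphi\in C^k_b$ by induction on $\ell$. We swap the non-Gaussian weights of layer $\ell$ one at a time for Gaussians with matching first two moments. A third-order Taylor expansion bounds each swap error by $\|\varphi'''\|\,\E|w|^3|\sigma|^3\sim n_{\ell-1}^{-3/2}$. There are $n_{\ell-1}$ swaps per neuron, so the total is $n_{\ell-1}^{-1/2}$. Once layer $\ell$ is Gaussian conditionally on the previous layers, it is a Gaussian with random variance $\frac{C_W}{n_{\ell-1}}\|\sigma(z^{(\ell-1)})\|^2$. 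We then write $\E\varphi$ as a smooth function $\psi$ of this variance, and $\psi$ has derivatives controlled by those of $\varphi$ composed with $\sigma$. This reduces the problem to a weak estimate at layer $\ell-1$ for the test function $\sigma^2$ composed with $\psi$-type functions.

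Each descent through a layer roughly doubles the number of required derivatives, because the test function becomes a function of $\sigma^2$ and its derivatives. This is where $\sigma\in C_b^{3\cdot2^{L-1}}$ enters. It also compounds square-root losses, since zero biases force us to regularize near degenerate variances. We expect this bookkeeping to be the main obstacle: making each inductive step lose at most a square root, so that the accumulated error is $(\sum_{k<L}n_k^{-1/2})^{1/2}$ in the variance and hence the fourth root in the statement after Step 2. Combining Steps 1–3 gives the claimed bound, with $C=C(L,n_{L+1},\sigma)$.
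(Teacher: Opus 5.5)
Your Steps~1 and~2 follow the paper's outline (Lemma~\ref{lem:last_layer_replacement}, then Theorem~\ref{th:entropic-bound}). There is, however, a genuine gap where Step~2 hands off to Step~3.

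Write $v_\ell:=\frac{C_W}{n_\ell}\sum_j\sigma(z^{(\ell)}_j(x))^2$ and $\psi(t):=\E[\sigma(\sqrt t\,Z)^2]$ with $Z\sim\Ncal(0,1)$, so that $K^{(\ell+1)}=C_W\psi(K^{(\ell)})$. Step~2 needs an \emph{absolute} moment of $v_L-K^{(L+1)}$. Remove the conditional law-of-large-numbers part $v_L-\E[v_L\mid\Fcal_{L-1}]$. The remainder $C_W\E[\sigma(z^{(L)}_1)^2\mid\Fcal_{L-1}]-K^{(L+1)}$ is still random. Up to an $O(n_{L-1}^{-1/2})$ Lindeberg error it equals $C_W\bigl(\psi(v_{L-1})-\psi(K^{(L)})\bigr)$. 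Its size is therefore governed by the concentration of the empirical variance at layer $L-1$, not by the deterministic difference $\E[\sigma^2(z^{(L)}_1)]-\E[\sigma^2(\Zcal^{(L)})]$ you propose to estimate.

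The same issue stops the induction in Step~3. After conditioning, $\E\varphi(z^{(\ell)}_1)$ becomes $\E[\psi_\varphi(v_{\ell-1})]$ with $\psi_\varphi(t):=\E[\varphi(\sqrt t\,Z)]$. This is a nonlinear function of an average over all $n_{\ell-1}$ neurons, not a test function of the single neuron $z^{(\ell-1)}_1$, so a single-neuron weak-distance hypothesis cannot be fed back in. Comparing it with $\psi_\varphi(K^{(\ell)})$ requires cross-neuron information.

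The paper gets that information as follows:
\begin{itemize}
\item Lemma~\ref{lem:centered-average-moments} turns $\E\|A-K\|_{HS}^p$, $p=2,4$, into joint moments $\E[\prod_{r\le\alpha}\sigma^2(z^{(L)}_{i_r})]$ of up to four distinct neurons.
\item These are compared with the all-Gaussian network by switching whole layers (Theorem~\ref{thm:DNN_Lindeberg_switching} with Remark~\ref{rem:Applicability}).
\item The all-Gaussian network is then handled by Lemma~\ref{lem:gaussian-hidden-layer-to-gp}.
\end{itemize}
The $3\cdot 2^{L-1}$ derivatives come from that multidimensional swap. They are needed to bound $\Lambda_3$ of the averaged test functions uniformly in the width, via $\frac{d^k}{dt^k}P_t=\Delta^kP_t$ and Fa\`a di Bruno. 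They do not come from composing with $\sigma^2$.

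Your account of the exponent is also not an argument. Nothing degenerates: $\frac{d}{dt}\E[\varphi(\sqrt t\,Z)]=\frac12\E[\varphi''(\sqrt t\,Z)]$ is bounded down to $t=0$, and invertibility of $K^{(L+1)}$ already makes the square root Lipschitz. Moreover, compounding a square-root loss over $L$ layers would give an exponent that degrades with $L$, not a fixed $1/4$.

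Your ingredients can be repaired by changing the quantity you propagate: induct on $\|v_\ell-K^{(\ell+1)}\|_{L^2}$ rather than on weak distances. Three estimates are needed.
\begin{itemize}
\item Conditional independence of the neurons given $\Fcal_{\ell-1}$ gives $\|v_\ell-\E[v_\ell\mid\Fcal_{\ell-1}]\|_{L^2}\lesssim n_\ell^{-1/2}$.
\item A one-dimensional Lindeberg swap with test function $\sigma^2$, uniform in the conditioning, gives $\bigl|\E[\sigma(z^{(\ell)}_1)^2\mid\Fcal_{\ell-1}]-\psi(v_{\ell-1})\bigr|\lesssim n_{\ell-1}^{-1/2}$.
\item Finally, $|\psi(v_{\ell-1})-\psi(K^{(\ell)})|\le\frac12\|(\sigma^2)''\|_\infty\,|v_{\ell-1}-K^{(\ell)}|$.
\end{itemize}
This recursion closes with only $\sigma\in C^3_b$ and gives $\|v_L-K^{(L+1)}\|_{L^2}\lesssim\sum_{k\le L}n_k^{-1/2}$. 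The synchronous coupling from the proof of Theorem~\ref{th:entropic-bound}, together with $(\sqrt a-\sqrt b)^2\le(a-b)^2/b$, then yields the stated bound.

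Separately, in Step~1 a $\Wcal_2$ rate $n_L^{-1/2}$ does not in general follow from third moments via smoothing. The paper instead invokes Bonis' bound \eqref{eq:1d_wass_bound}, which is stated in terms of fourth moments.
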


Observe that \textbf{Theorem \ref{thm:W_2_bound}} requires rather strong regularity assumptions on the activation function \(\sigma\).  Nevertheless, many activation functions of interest satisfy these hypotheses, for example, the tanh, the arctan or the sigmoid activation function.
The reason for the regularity assumptions is outlined in \textbf{Section \ref{sec:Proof_sketch}}. On the other hand, if the neural network has Gaussian biases, we can leverage the extra smoothing obtained by them to dramatically reduce the regularity required by the activation function. In particular, our second main result is the following theorem

\begin{theorem}\label{thm:Improved_W_2_bound}
    Let \(z^{(L+1)}(x)\) be the output of a DNN with weights and biases satisfying \textbf{Assumptions \ref{assum:weights}} and \textbf{\ref{assum:biases}} respectively. Moreover, assume that \(\sigma\in C^{3}_b(\R)\) and that 
    \(K^{(\ell)}(x)\) is invertible for \(\ell=1,\ldots,n_{L+1}\). Then for some constant \(C= C(L,n_{L+1},\sigma)\)
   
    \[
        \Wcal_2\bigl(z^{(L+1)}(x),\Zcal^{(L+1)}(x)\bigr)
         \leq C\biggl(\frac{1}{\sqrt{n_L}}+\Bigl( \sum_{k=1}^{L-1}\frac{1}{\sqrt{n_k}} \Bigr)^{1/4}\biggr),
    \]
    where now \(\Zcal^{(L+1)}(x)\) is the biased version of the Gaussian process from \textbf{Theorem \ref{thm:qualitative_thm}}.
\end{theorem}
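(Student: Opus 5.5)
The plan is to condition on the last hidden layer and then treat the hidden layers by a Lindeberg exchange with smooth test functions, converting weak bounds into $\Wcal_2$ bounds at the end. Write $\Fcal_L$ for the $\sigma$-algebra generated by all weights and biases up to layer $L$.

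\emph{Step 1: the last layer.} Conditionally on $\Fcal_L$, the output $z^{(L+1)}(x)=W^{(L+1)}\sigma(z^{(L)}(x))+b^{(L+1)}$ is a sum of $n_L$ independent terms with conditional covariance $\Sigma_L=c_b+\frac{C_W}{n_L}\norm{\sigma(z^{(L)}(x))}^2$ in each coordinate. I will let $\tilde z^{(L+1)}$ be the network in which $W^{(L+1)}$ is replaced by a Gaussian matrix, so that $\tilde z^{(L+1)}\mid\Fcal_L\sim\Ncal(0,\Sigma_L I)$. A conditional quantitative CLT in $\Wcal_2$ for sums of independent vectors with finite third moments (Bonis/Zhai type, possibly restricted to $\Wcal_2$ bounds for $n_{L+1}$-dimensional sums with bounded summands after truncation) should give
\[
\Wcal_2\big(z^{(L+1)},\tilde z^{(L+1)}\big)\lesssim n_L^{-1/2}.
\]
Here I use that $\sigma$ is bounded, that $\Sigma_L\ge c_b>0$, and that the third moments of $\mu$ are finite. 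This produces the term $1/\sqrt{n_L}$.

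\emph{Step 2: comparing Gaussian mixtures.} Next I compare $\tilde z^{(L+1)}$, a Gaussian with random variance $\Sigma_L$, with $\Zcal^{(L+1)}\sim\Ncal(0,K^{(L+1)}I)$. By the appendix bound on $\Wcal_2$ between conditionally Gaussian variables,
\[
\Wcal_2^2\lesssim \E\big|\Sigma_L^{1/2}-(K^{(L+1)})^{1/2}\big|^2\lesssim \E\big|\Sigma_L-K^{(L+1)}\big|,
\]
where the second inequality uses $K^{(L+1)}\ge c_b>0$. It then suffices to show
\[
\E\Big|\tfrac1{n_L}\textstyle\sum_j\sigma(z^{(L)}_j)^2-\E[\sigma(\Zcal^{(L)})^2]\Big|\lesssim\Big(\sum_{k<L}n_k^{-1/2}\Big)^{1/2}.
\]
Taking the square root then yields the exponent $1/4$. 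Split this quantity into a fluctuation part and a bias part. The fluctuation of the empirical average is $O(n_L^{-1/2})$, because conditionally on $\Fcal_{L-1}$ the $z^{(L)}_j$ are i.i.d. The bias part is $\E\sigma(z^{(L)}_1)^2-\E\sigma(\Zcal^{(L)})^2$, a smooth functional of a single low-dimensional coordinate.

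\emph{Step 3: the bias part via Lindeberg.} I will prove by induction on layers that for $f\in C^3_b$, with $C$ depending on $\norm{f}_{C^3}$,
\[
\big|\E f(z^{(\ell)}_1)-\E f(\Zcal^{(\ell)})\big|\le C\sum_{k<\ell}n_k^{-1/2}.
\]
The plan is to replace the weights of layer $k$ one row entry at a time by Gaussians with matching first two moments. A third-order Taylor expansion gives error $n_k\cdot n_k^{-3/2}=n_k^{-1/2}$ per layer. The derivatives of the map from layer-$k$ weights to $f(z^{(\ell)})$ are controlled by $\norm{\sigma}_{C^3}$, with averaging over the $1/\sqrt n$-scaled sums. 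Once a layer is Gaussian, the layer above is conditionally Gaussian with covariance given by an empirical average. This reduces the problem to the previous layer applied to the test function $g(y)=\E f(\sqrt{c_b+C_W y}\,N)$, which is smooth precisely because $c_b>0$; this Gaussian smoothing is what lets $\sigma\in C^3_b$ suffice.

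\emph{Main obstacle.} The hard step is controlling the third derivatives in Step 3 uniformly in the widths. A naive estimate counts $n_k n_{k+1}$ weights, each contributing $n^{-3/2}$, which is too large. I will try to exploit that the coupling passes through the normalized sums $n^{-1/2}\sum_j w_{ij}\sigma(\cdot)$, so that a mixed third derivative picks up extra $n^{-1/2}$ factors. The main estimate is a bound on $\E|\partial^3_{w}F|$ by $n_k^{-3/2}$ times a row-count factor of order one per output coordinate, uniformly over the interpolating networks. I would obtain this from moment bounds on derivatives of the hidden pre-activations with respect to the inputs of the next layer.
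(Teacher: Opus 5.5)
\textbf{There is a genuine gap in Step 2, and it also breaks the induction in Step 3.}

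Write $\bar Y:=\frac1{n_L}\sum_j\sigma(z^{(L)}_j)^2$ and $m:=\E[\sigma(\Zcal^{(L)})^2]$. You split $\bar Y-m=(\bar Y-\E\bar Y)+(\E\bar Y-m)$ and claim the first term is $O(n_L^{-1/2})$ because the $z^{(L)}_j$ are i.i.d.\ given $\Fcal_{L-1}$. Conditional independence only controls $\bar Y-\E[\bar Y\mid\Fcal_{L-1}]$. The remaining piece $\E[\sigma(z^{(L)}_1)^2\mid\Fcal_{L-1}]-\E[\sigma(z^{(L)}_1)^2]$ is a nondegenerate random function of the earlier layers and does not decay in $n_L$ at all. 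Indeed, fix $n_1,\dots,n_{L-1}$ and let $n_L\to\infty$: then $\bar Y$ converges to this random conditional mean, not to a constant.

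Controlling this term amounts to showing that distinct neurons of layer $L$ become asymptotically uncorrelated, which concerns the joint law of several coordinates. A one-coordinate bound $|\E f(z^{(\ell)}_1)-\E f(\Zcal^{(\ell)})|$ cannot give it. The same problem stops your induction from closing. After making layer $\ell$ Gaussian you get $\E f(z^{(\ell)}_1)=\E g(v_{\ell-1})$ with $v_{\ell-1}=\frac{C_W}{n_{\ell-1}}\norm{\sigma(z^{(\ell-1)})}^2$. Comparing this with $g(m_{\ell-1})$, where $m_{\ell-1}:=C_W\E[\sigma(\Zcal^{(\ell-1)})^2]$, needs concentration of $v_{\ell-1}$, not a statement about a marginal.

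The paper supplies exactly this ingredient in three steps:
\begin{itemize}
\item Lemma~\ref{lem:centered-average-moments} bounds $\E|\bar Y-m|^p$ ($p=2,4$) by the multi-point moments $\E[\prod_{r=1}^\alpha\sigma(z^{(L)}_{i_r})^2]$, $\alpha\le 4$.
\item These are transferred to the all-Gaussian network by Lindeberg switching for test functions of finitely many coordinates (Remark~\ref{rem:Applicability} with Theorem~\ref{thm:improved_Lind_switching}).
\item They are then compared with $m^\alpha$ via Lemma~\ref{lem:gaussian-hidden-layer-to-gp}; this is where the decorrelation of distinct neurons enters.
\end{itemize}
Within your own framework, one workable repair is to make $\E|v_\ell-m_\ell|^2$ the induction quantity. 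A conditional one-dimensional Lindeberg step shows that $\E[v_\ell\mid\Fcal_{\ell-1}]$ differs from $C_W\E[\sigma(\sqrt{c_b+v_{\ell-1}}\,N)^2]$ by $O(n_{\ell-1}^{-1/2})$. The latter expression is Lipschitz in $v_{\ell-1}$, which closes the recursion.

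\textbf{Smaller points.}
\begin{itemize}
\item \emph{Rate in $n_L$.} Even granting Step 2, the inequality $|\sqrt a-\sqrt b|^2\le|a-b|$ turns the $O(n_L^{-1/2})$ fluctuation into an $n_L^{-1/4}$ contribution to $\Wcal_2$, not the claimed $1/\sqrt{n_L}$. This term is also not dominated by $(\sum_{k<L}n_k^{-1/2})^{1/2}$, contrary to your ``it suffices''. Since $\Sigma_L,K^{(L+1)}\ge c_b>0$, use $|\sqrt a-\sqrt b|\le|a-b|/(2\sqrt{c_b})$ and work with second moments of $\Sigma_L-K^{(L+1)}$, as the first term of Theorem~\ref{th:entropic-bound} does in the paper.
\item \emph{Moments in Step 1.} Step 1 matches Lemma~\ref{lem:last_layer_replacement}, but the Bonis bound behind it is in terms of fourth moments. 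With only three moments, truncation yields roughly $n_L^{-1/4}$, so this step really uses $\E|w|^4<\infty$. The paper's proof also uses this implicitly.
\item \emph{Your ``main obstacle''.} The paper obtains width-uniform third-derivative bounds from the structure you already noticed. After averaging over the later Gaussian layers, the test function depends on the pre-activations only through $q_K(x)=\frac{C_W}{n}\sum_i\sigma(x_i)^2$. By Fa\`a di Bruno, the third derivatives summed over all index triples are then $O(1)$. The Gaussian bias keeps the recursion for $\phi_{L,K}$ at three derivatives.
\end{itemize}
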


The Gaussianity of the biases is essential in the proof of this improved result, just as it is in the proof of \textbf{Theorem \ref{thm:qualitative_thm}}. Indeed, since the biases are not scaled to zero as the widths diverge, their distribution has a non-trivial effect on the limiting infinite-width process. This is also the reason why the weights \(W^{(1)}\) need to be Gaussian. The same point is made in \cite{Hanin2023}.

Our proof is based on the Lindeberg replacement principle: we successively replace the weights of the network by Gaussian weights with matching first and second moments, controlling the error at each step. This reduces the problem to the case of Gaussian weights, where quantitative results from \cite{MR4898096} can be applied. We refer to \textbf{Section \ref{sec:Proof_sketch}} for a detailed discussion of the argument. This replacement is captured in the following weak comparison principle, which can be viewed as a Lindeberg principle for deep neural networks.

\begin{theorem}\label{thm:main-weak-bound}
    Let \(z^{(L+1)}\) and \(\tilde z^{(L+1)}\) be two DNNs in the sense of \textbf{Definition \ref{def:Deep_Neural_Network}}, with the same widths, with zero biases, and with an activation function \(\sigma\in C^{3\cdot2^{L-1}}_b\). Assume that the weights in \(z^{(L+1)}\) satisfy \textbf{Assumption \ref{assum:weights}} and \(\tilde z^{(L+1)}\) has Gaussian weights with matching first and second moments. Then
    \[
        \Bigl|\E[F(z^{(L+1)}(x))]-\E[F(\tilde z^{(L+1)}(x))]\Bigr|
        \leq C
        \sum_{k=1}^{L}\frac{1}{\sqrt{n_{k}}},
    \]
    for all \(F\in C^{3\cdot 2^{L-1}}_b(\R^{n_{L+1}})\).
    If the DNNs \(z^{(L+1)}\), \(\tilde z^{(L+1)}\) also have the same biases satisfying \textbf{Assumption \ref{assum:biases}}, and an activation function \(\sigma\in C^{3}_b(\R)\), then the same bound is true for all \(F\in L^\infty(\R^{n_{L+1}})\) with a  constant of the form \(C(L,n_{L+1,\sigma})\|F\|_{\infty}\). If instead, \(F\in C^1_b(\R^{n_{L+1}})\), the constant is of the form \(C(L,n_{L+1}, \sigma)\|\nabla F\|_{\infty}\).
\end{theorem}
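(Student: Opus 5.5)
We will prove the bound by replacing the non-Gaussian weights one layer at a time, starting from the last layer and moving towards the input. Introduce hybrid networks \(z_k\), \(k=1,\dots,L+1\), in which the weights of layers \(k+1,\dots,L+1\) are Gaussian and those of layers \(2,\dots,k\) follow \textbf{Assumption \ref{assum:weights}}. Here \(z_{L+1}=z^{(L+1)}\) and \(z_1=\tilde z^{(L+1)}\); recall that layer \(1\) is Gaussian in both networks. All hybrids are built on one probability space, and the triangle inequality reduces the claim to bounding \(|\E F(z_{k})-\E F(z_{k-1})|\) for each \(k=2,\dots,L+1\). These two networks differ only in the layer-\(k\) weights.

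\textbf{Step 1: one layer, conditional Lindeberg.} We condition on the first \(k-1\) layers. Then \(\sigma(z^{(k-1)})\in\R^{n_{k-1}}\) is frozen and the layer-\(k\) preactivation is
\[
z^{(k)}_i=\sum_{j} w^{(k)}_{ij}\sigma(z^{(k-1)}_j).
\]
Define \(G_k(y)=\E[F(z^{(L+1)})\mid z^{(k)}=y]\), where the expectation is over the Gaussian layers \(k+1,\dots,L+1\). We swap the entries \(w^{(k)}_{ij}\) for Gaussians one at a time. By Taylor expansion to third order with matched first and second moments, each swap costs at most
\[
C\,\E|w|^3\,|\sigma(z^{(k-1)}_j)|^3\,\sup|\partial_i^3 G_k|\lesssim n_{k-1}^{-3/2}\|\sigma\|_\infty^3\,\|\partial^3 G_k\|_\infty .
\]
There are \(n_kn_{k-1}\) swaps, so the total cost is \(n_k n_{k-1}^{-1/2}\|\partial^3G_k\|_\infty\). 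This is too large by the factor \(n_k\); this is the dimension-dependence problem mentioned in the introduction.

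The fix is to exploit the next layer. \(G_k\) depends on \(y\in\R^{n_k}\) only through the Gaussian vector \(W^{(k+1)}\sigma(y)\), whose coordinates are sums with coefficients of size \(n_k^{-1/2}\). Each mixed derivative \(\partial_{y_i}^3\) therefore carries explicit factors \(w^{(k+1)}_{\cdot i}\), each of order \(n_k^{-1/2}\). After taking expectation over \(W^{(k+1)}\), the third derivative in coordinate \(i\) is \(O(n_k^{-1})\) in the relevant averaged sense. The sum over \(i\) then gives \(O(1)\), and the total layer-\(k\) error is \(O(n_{k-1}^{-1/2})\). Over all \(k\) this yields \(\sum_{k}n_k^{-1/2}\), matching the statement after reindexing.

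\textbf{Step 2: propagating regularity.} To make this rigorous we must bound derivatives of \(G_k\) of order \(3\). By the chain rule these require \(F\) and \(\sigma\) in \(C^3\) composed through the later layers. The difficulty is that, to control the Lindeberg remainder uniformly over the random intermediate states, we also need the Taylor remainder of the \emph{conditional} functional to be smooth in the earlier variables. Concretely, we will bound \(\E\|\partial^3 G_k\|\) by a Gaussian integration-by-parts or Taylor argument in the layer-\((k+1)\) variables, which consumes derivatives of \(G_{k+1}\) of order roughly \(2\times 3\) after expansion. Iterating over the \(L-1\) remaining layers doubles the derivative count each time, giving the \(3\cdot2^{L-1}\) requirement on \(F\) and \(\sigma\). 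I expect this recursive derivative bookkeeping, uniform in the widths, to be the main obstacle. My plan is to prove by backward induction on \(k\) a lemma of the form \(\sup_y\sum_i|\partial_i^3G_k(y)|\le C\) with \(C\) independent of the widths, using exchangeability of the coordinates and the \(n^{-1/2}\) scaling of the weights.

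\textbf{Step 3: the biased case.} With Gaussian biases \(b^{(k+1)}\sim\Ncal(0,c_bI)\), we write \(z^{(k+1)}=W^{(k+1)}\sigma(z^{(k)})+b^{(k+1)}\). Conditionally on earlier layers this is a nondegenerate Gaussian with covariance at least \(c_bI\), so \(G_k\) is a Gaussian convolution. All derivatives can then be moved onto the Gaussian density, giving
\[
\|\partial^3 G_k\|\lesssim \|F\|_\infty \quad\text{or}\quad \|\partial^3 G_k\|\lesssim\|\nabla F\|_\infty,
\]
with constants depending on \(c_b\) and \(\sigma\in C^3_b\). No doubling occurs, because the smoothing is refreshed at every layer by the biases. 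The last layer still needs the \(n_{L+1}\)-dimensional smoothing, which explains why the constant depends on \(n_{L+1}\). Repeating Steps 1 and 2 with these bounds gives the \(L^\infty\) and \(C^1_b\) statements.
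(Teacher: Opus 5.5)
Your architecture is the paper's: telescoping over hybrid networks, conditioning on the earlier layers, and running Lindeberg against the averaged test function $G_k$ (the paper's $\Fcal_{L,K}$). Swapping scalar entries instead of whole columns is harmless and only involves the diagonal derivatives $\partial_i^3G_k$.

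The gap is the one estimate that goes beyond classical Lindeberg: $\sum_i\sup_y|\partial_i^3G_k(y)|\lesssim 1$ uniformly in the widths. (You need $\sum_i\sup_y$, not $\sup_y\sum_i$, since each swap's remainder is evaluated at a different random point.) This is left unproved, and the induction you propose for it does not close. Each $\partial_{y_i}$ of $\E[G_{k+1}(W^{(k+1)}\sigma(y))]$ produces $\sum_{r\le n_{k+1}} w^{(k+1)}_{ri}\,\partial_r G_{k+1}$. Counting factors $n_k^{-1/2}$ ignores these sums over the diverging width $n_{k+1}$. The rate $O(n_k^{-1})$ appears only after Gaussian integration by parts contracts them into $\E[\Delta^m G_{k+1}]$, $m\le 3$. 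Those are sums over $n_{k+1}$ coordinates of \emph{mixed} derivatives of order up to $6$, which a hypothesis on third-order diagonal derivatives does not control.

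What is missing is the paper's structural observation together with the right inductive quantity. Since $W^{(k+1)}$ has i.i.d.\ isotropic Gaussian rows, $W^{(k+1)}\sigma(y)\overset{d}{=}\sqrt{q(y)}\,Z$ with $q(y)=\frac{C_W}{n_k}\sum_i\sigma(y_i)^2$. Hence $G_k=\Phi\circ q$ with $\Phi(t)=\E[G_{k+1}(\sqrt t\,Z)]$ and $\Phi^{(m)}(t)=2^{-m}\E[\Delta^m G_{k+1}(\sqrt t\,Z)]$. All derivatives of $q$ are diagonal and $O(1/n_k)$, so Fa\`a di Bruno gives $\Lambda_r(G_k)\lesssim\sum_{j\le 2r}\Lambda_j(G_{k+1})$, where $\Lambda_r(f)=\sum_{a_{1:r}}\sup|\partial_{a_{1:r}}f|$. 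The induction must be run on this whole family (orders up to $3\cdot 2^{k-2}$ for $G_k$). That is where the doubling you anticipate is actually accounted for.

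Step 3 contains a genuine error. Moving all derivatives onto the Gaussian density gives width-uniform constants only on $\R^{n_{L+1}}$. At an inner layer, $G_k(y)=\int G_{k+1}\,p_{q(y)+c_b}$ with $p_v$ the $\Ncal(0,vI_{n_{k+1}})$ density, and $\int|\partial_v^m p_v|\,dx\asymp n_{k+1}^{m/2}v^{-m}$; for $m=1$ this equals $\frac{1}{2v}\E\bigl||U|^2-n_{k+1}\bigr|$. So using only $\|G_{k+1}\|_\infty\le\|F\|_\infty$, your bound on $\sum_i\sup_y|\partial_i^3 G_k|$ grows like $\sqrt{n_{k+1}}$. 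The resulting layer-$k$ error is then of order $\sqrt{n_{k+1}/n_{k-1}}$, which does not vanish for comparable widths.

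The paper avoids this by never leaving the scalar variance variable. It writes $G_k=\phi_k\circ q$ with $\phi_k(s)=\E\bigl[\phi_{k+1}\bigl(\frac{C_W}{n_{k+1}}\sum_i\sigma(\sqrt{s+c_b}\,U_i)^2\bigr)\bigr]$ and takes $s$-derivatives through $\sigma$ by the chain rule. Averages of $\partial_s^r\sigma(\sqrt{s+c_b}\,U_i)^2$ have moments bounded uniformly in $n_{k+1}$; this is where $c_b>0$ and $\sigma\in C^3_b$ enter. This yields $\sum_{r\le 3}\sup|\phi_k^{(r)}|\lesssim\sum_{r\le 3}\sup|\phi_{k+1}^{(r)}|$. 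Gaussian smoothing against $F\in L^\infty$ or $C^1_b$ is used only in the base case on $\R^{n_{L+1}}$. This one-dimensional recursion, not the ``refreshed smoothing'', is why no derivative doubling occurs.
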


Finally, \textbf{Theorem \ref{thm:main-weak-bound}} also yields, by a standard duality argument, a weaker but more direct bound on \(1-\)Wasserstein distance. When there are no biases the regularity requirements on the test function are so strong, that we will get a worse estimate than the one we can get from \textbf{Theorem \ref{thm:W_2_bound}}, since  $\Wcal_1\leq \Wcal_2$. When there are biases however, \textbf{Theorem \ref{thm:main-weak-bound}} yields a bound in the total variation distance and the \(1-\)Wasserstein distance with optimal rates in \(n_1,\ldots, n_L\):

\begin{corollary}
    Let \(z^{(L+1)}(x)\) be the output of a DNN with weights and biases satisfying \textbf{Assumptions \ref{assum:weights}} and \textbf{\ref{assum:biases}} and with activation function \(\sigma\in C^{3}_b(\R)\). Assuming that 
    \(K^{(\ell)}(x)\) is invertible for \(\ell=1,\ldots,n_{L+1}\), we have
    \begin{equation*}
        \max\{d_{TV}\bigl(z^{(L+1)}(x),\Zcal^{(L+1)}(x)\bigr),\,\Wcal_1\bigl(z^{(L+1)}(x),\Zcal^{(L+1)}(x)\bigr)\}\leq C\sum_{k=1}^{L}\frac{1}{\sqrt{n_{k}}},
    \end{equation*}
    where \(d_{TV}\) denotes the total variation distance.
\end{corollary}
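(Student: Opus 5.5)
The plan is to combine the second part of \textbf{Theorem \ref{thm:main-weak-bound}} with a known quantitative bound for networks with Gaussian weights and biases, and to pass through the triangle inequality. Let \(\tilde z^{(L+1)}(x)\) denote the network with the same widths and the same Gaussian biases, in which all weights are replaced by Gaussian weights with matching first and second moments. Then
\[
d\bigl(z^{(L+1)}(x),\Zcal^{(L+1)}(x)\bigr)\leq d\bigl(z^{(L+1)}(x),\tilde z^{(L+1)}(x)\bigr)+d\bigl(\tilde z^{(L+1)}(x),\Zcal^{(L+1)}(x)\bigr)
\]
for \(d\in\{d_{TV},\Wcal_1\}\), and the two terms are handled separately.

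The first term comes directly from duality. We have \(d_{TV}=\sup_{\|F\|_\infty\le 1}|\E F(z)-\E F(\tilde z)|\), up to a factor \(2\) depending on convention, with the supremum over bounded measurable \(F\). The biased case of \textbf{Theorem \ref{thm:main-weak-bound}} with \(F\in L^\infty\) gives the bound \(C\sum_{k=1}^L n_k^{-1/2}\), uniformly over such \(F\). For \(\Wcal_1\) we use Kantorovich--Rubinstein duality, \(\Wcal_1=\sup_{\mathrm{Lip}(F)\le1}|\E F(z)-\E F(\tilde z)|\). Lipschitz \(F\) are not bounded, so we first subtract \(F(0)\), which does not change the difference of expectations. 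We then approximate \(F\) by mollified and truncated functions \(F_{\varepsilon,R}\in C^1_b\) with \(\|\nabla F_{\varepsilon,R}\|_\infty\le 1\). The \(C^1_b\) case of \textbf{Theorem \ref{thm:main-weak-bound}} bounds each such term by \(C\sum_k n_k^{-1/2}\). The truncation error goes to zero as \(R\to\infty\) because second moments of \(z^{(L+1)}(x)\) and \(\tilde z^{(L+1)}(x)\) are bounded: this follows by induction over layers, using boundedness of \(\sigma\) and unit variance of \(\mu\).

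For the second term, between \(\tilde z^{(L+1)}(x)\) and the Gaussian limit, we invoke the Gaussian-weight results. These give \(d_{TV}\) bounds of order \(\sum_k n_k^{-1/2}\) from \cite{entr_bounds_cond_gaussian}, and \(\Wcal_1\le\Wcal_2\lesssim\sum_k n_k^{-1/2}\) from \cite{MR4898096} or \cite{trevisan2023widedeepneuralnetworks}. They apply under our hypotheses because the covariances \(K^{(\ell)}(x)\) are invertible and \(\sigma\in C^3_b\) is in particular polynomially bounded with Lipschitz derivatives. If those references only give \(\Wcal\) bounds, the total variation can be recovered as follows. Conditionally on the previous layer, \(\tilde z^{(L+1)}(x)\) is Gaussian with covariance \(c_b I+\frac{C_W}{n_L}\|\sigma(\tilde z^{(L)})\|^2 I\), which is bounded below by \(c_b I\). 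The total variation distance between two Gaussians is controlled by the Frobenius norm of the difference of their covariances. It therefore suffices to bound \(\E\bigl|\frac{C_W}{n_L}\|\sigma(\tilde z^{(L)})\|^2-(K^{(L+1)}(x)-c_b)\bigr|\). This is a concentration estimate, of order \(\sum_k n_k^{-1/2}\), obtained by induction using the Lipschitz property of \(\sigma^2\) and the \(\Wcal_1\) bounds at earlier layers.

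The main obstacle is making sure the constants in \textbf{Theorem \ref{thm:main-weak-bound}} really depend only on \(\|F\|_\infty\), respectively \(\|\nabla F\|_\infty\), so that the supremum over test functions can be taken. A second concern is checking that the Gaussian-weight estimate in total variation holds with the rate \(\sum_k n_k^{-1/2}\) rather than a worse power. For the total variation estimate of the second term I would first try the conditional-Gaussian argument sketched above, since the Gaussian biases give nondegeneracy uniformly in the widths.
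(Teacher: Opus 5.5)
Your proposal is correct and follows essentially the paper's route. The corollary comes from duality applied to the biased case of \textbf{Theorem \ref{thm:main-weak-bound}}, using bounded measurable \(F\) for \(d_{TV}\) and Lipschitz \(F\) for \(\Wcal_1\). The remaining comparison between the all-Gaussian-weight network and \(\Zcal^{(L+1)}(x)\) is supplied by the Gaussian-weight quantitative bound of \cite{MR4898096}; the paper folds that step into the proof of the theorem rather than writing it as a separate triangle-inequality step. Your truncation-and-mollification passage from \(C^1_b\) test functions to general \(1\)-Lipschitz ones, and your conditional-Gaussian fallback for the total variation term, only make explicit what the paper calls a ``standard duality argument''.
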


\subsection{Frequently adopted notation}

We collect here some notation used throughout the paper.

\begin{itemize}
    \item For \(p\geq 1\), \(\Wcal_p(X,Y)\) denotes the \(p\)-Wasserstein distance between the laws of two random variables \(X\) and \(Y\):
    \[
        W_p(X,Y)
            :=
            \left(
            \inf_{\pi\in \Pi(X,Y)}
            \int_{\mathbb R^d\times \mathbb R^d}
            |x-y|^p\,d\pi(x,y)
            \right)^{1/p},
    \]
    where \(\Pi(X,Y)\) denotes the set of couplings of \(X\) and \(Y\).

    \item We write \(A\lesssim B\) if there exists a constant \(C>0\), independent of the widths \(n_1,\dots,n_L\), such that \(A\leq C B\).

    \item We denote the usual convolution between two functions $f$ and $g$ as $f \star g$.

    \item For a function \(F:\R^d\to\R\) and a multi-index of coordinates \(a_{1:r}=(a_1,\dots,a_r)\), we write
    \[
        \partial_{a_{1:r}}F=\partial_{a_1}\cdots\partial_{a_r}F \,,
    \]
    and we write $ |a_{1:r}| = r $.
    \item Given \(x\in\R^d\), the activation \(\sigma(x)\) is understood componentwise:
    \[
        \sigma(x)=(\sigma(x_1),\dots,\sigma(x_d)).
    \]
    \item For a function \(F:\R^d\to \R\) we denote
    \begin{equation}\label{eq:der_sum_quantities}
    \Lambda_r(F)=\sum_{a_1, \dots a_r = 1}^r\sup\limits_{x\in\R^d} \left|\partial_{a_{1:r}}F(x) \right| \,.
    \end{equation}
    \item We denote by $P_t=e^{t \Delta}$ the standard heat semigroup
    on $\R^d$; equivalently, one has $(P_t f)(x)=(p_t\star f)(x)$, where $p_t=(4\pi t)^{-d/2} \exp(-\sfrac{|x|^2}{4t})$, and $\widehat{P_t f}(\xi)=e^{-t|\xi|^2} \widehat f(\xi)$.
\end{itemize}

\section{Idea of the proof}\label{sec:Proof_sketch}
As mentioned in the introduction, our proof is based on the Lindeberg switching principle. The general result, in the context of the sums of i.i.d. random variables, is contained in the following result

\begin{proposition}\label{prop:Full_Lind_Prin}
    Let $(\bX_i)_{i\in\N}$ be a sequence of independent random vectors in $\R^d$. We assume that, for all $i\in\N$, the coefficients of $\bX_i$ are i.i.d. with zero mean, variance $\sigma_i^2$ and absolute third moments equal to $m_3^{(i)}$. Also, let $(\bY_i)_{i\in\N}$, be a sequence of independent Gaussian vectors in $\R^d$, so that the coefficients of $\bY_i$ are all independent and have matching first and second moments to the corresponding coefficients of $\bX_i$. Finally, let $F \in C^3_b(\R^d;\R)$. Then
    \[
        \biggl|\E\biggl[F\biggl(\sum_{i=1}^n\bX_i\biggr)\biggr]-\E\biggl[F\biggl(\sum_{i=1}^n\bY_i\biggr)\biggr]\biggr|\lesssim C_F\cdot d^3\cdot\sum_{i=1}^nm_3^{(i)},
    \]
    where the implied constant depends only on universal constants and 
    \[
        C_F:=\max \left\{||\partial^3_{i,j,k}F||_{L^{\infty}(\R^d)} \, : i,j,k=1,\dots d \right\}.
    \]
\end{proposition}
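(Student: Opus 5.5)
We use the standard Lindeberg telescoping swap, one vector at a time. Write \(\bS_n=\sum_{i=1}^n\bX_i\) and \(\bS_n'=\sum_{i=1}^n\bY_i\), where the families \((\bX_i)\) and \((\bY_i)\) are taken independent of each other; this is allowed because the statement only concerns the two laws separately. For \(k=0,\dots,n\) define the hybrid sums
\[
\bS^{(k)}:=\sum_{i=1}^{k}\bY_i+\sum_{i=k+1}^{n}\bX_i ,
\]
so that \(\bS^{(0)}=\bS_n\) and \(\bS^{(n)}=\bS_n'\). Then
\[
\E[F(\bS_n)]-\E[F(\bS_n')]=\sum_{k=1}^{n}\Bigl(\E[F(\bS^{(k-1)})]-\E[F(\bS^{(k)})]\Bigr),
\]
and it suffices to show that the \(k\)-th summand is \(\lesssim C_F\,d^3\,m_3^{(k)}\).

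Fix \(k\) and set \(\bS_{-k}:=\sum_{i<k}\bY_i+\sum_{i>k}\bX_i\). This vector is independent of both \(\bX_k\) and \(\bY_k\), and \(\bS^{(k-1)}=\bS_{-k}+\bX_k\), \(\bS^{(k)}=\bS_{-k}+\bY_k\). Taylor expand \(F\) around \(\bS_{-k}\) to second order with integral remainder:
\[
F(\bS_{-k}+h)=F(\bS_{-k})+\sum_{a}\partial_aF(\bS_{-k})h_a+\tfrac12\sum_{a,b}\partial_{ab}F(\bS_{-k})h_ah_b+R(h),\qquad |R(h)|\le \tfrac16\, C_F\Bigl(\sum_a|h_a|\Bigr)^3 .
\]
Take expectations with \(h=\bX_k\) and \(h=\bY_k\), conditioning on \(\bS_{-k}\) and using independence. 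The first two terms then cancel exactly, because \(\E[h_a]=0\) and \(\E[h_ah_b]=\sigma_k^2\delta_{ab}\) coincide for \(\bX_k\) and \(\bY_k\); the coordinates are independent with matching mean and variance.

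Only the remainders are left. For these we expand
\[
\E\Bigl(\sum_a|h_a|\Bigr)^3=\sum_{a,b,c}\E|h_ah_bh_c| .
\]
By Hölder, each term is at most \(\max_a\E|h_a|^3\), so the sum is at most \(d^3 m_3^{(k)}\) for \(h=\bX_k\). For \(h=\bY_k\) the same bound holds with \(\E|Y|^3=\sqrt{8/\pi}\,\sigma_k^3\). By Lyapunov's inequality \(\sigma_k^3=(\E|X|^2)^{3/2}\le\E|X|^3=m_3^{(k)}\), so this is again \(\lesssim d^3m_3^{(k)}\). Summing over \(k\) gives the stated bound, with an implied constant that is universal.

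The only point needing care is the bookkeeping of the third-order remainder: the \(d^3\) factor comes from the crude count of mixed third derivatives, and the Gaussian third moment must be compared with \(m_3^{(k)}\) via Lyapunov's inequality. The rest is routine.
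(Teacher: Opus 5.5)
Your proof is correct and is the classical Lindeberg swapping argument the paper has in mind; the paper carries out the same steps in the proof of Theorem~\ref{thm:DNN_Lindeberg_switching}: telescoping one vector at a time, a third-order Taylor expansion whose first- and second-order terms cancel by moment matching, and a crude H\"older bound on the remainder that produces the $d^3$ factor. Your use of Lyapunov's inequality to compare the Gaussian third moment $\sqrt{8/\pi}\,\sigma_k^3$ with $m_3^{(k)}$ makes explicit a step the paper leaves implicit.
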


The proof is classical and is based on a Taylor expansion of \(F\) up to the third order. Due to the moment-matching condition on the random variables, only the third-order terms survive, which are then estimated crudely using the fact that \(F\in C^3_b(\R^d)\). 

This argument also produces the factor \(d^3\) on the right-hand side of the above bound, which points to the difficulty in applying this proposition to our case.  To see it more clearly, first consider the output \(z^{(L+1)}(x)\) with  weights \(W^{(\ell)}\) satisfying \textbf{Assumption \ref{eq:weight_assum}}.
\begin{definition}\label{def:switched_layers_DNN}
For \(K=-1,\ldots,L-1\), we define \(z^{(L+1;L-K)}(x)\) to be the output of a DNN with weights \(\tilde{W}^{(\ell)}\) defined as 
\[
    \tilde{W}^{(\ell)}=\begin{cases}
                        W^{(\ell)},\quad \text{if}\quad l=1,\ldots L-K,\\
                        G^{(\ell)},\quad \text{if}\quad l=L-K+1,\ldots,L+1,
                   \end{cases}
\]
where \(G^{(\ell)}\) is a \(n_{\ell}\times n_{\ell-1}\) random matrix with independent, mean-zero Gaussian coordinates, with second moment equal to \(C_W/n_{\ell-1}\). These are also independent from \((W^{(\ell)})_{\ell\in\{1,\ldots L+1\}}\). 
\end{definition}

In other words, \(z^{(L+1;L-K)}(x)\) is the output of the DNN \(z^{(L+1)}\), but where we switched the weights of the layers \(L-K+1,\ldots,L+1\)  to Gaussian weights with second moments matching the moments of the weights of the original DNN. 

To estimate some distributional distance of \(z^{(L+1)}(x)\) from its limit, we first can estimate 
\[
    \E[F(z^{(L+1)}(x))]-\E[F(\Zcal^{(L+1)}(x))],
\]
where \(\Zcal^{(L+1)}(x)\) is the Gaussian process as in \textbf{Theorem \ref{thm:qualitative_thm}}. Using the DNN with the switched layers \(z^{(L+1;L-K)}(x)\) we may write the above difference as
\begin{align}
     \E[F(z^{(L+1)}(x))]-\E[F(\Zcal^{(L+1)}(x))]=&
     \sum_{K=-1}^{L-2}\E[F(z^{(L+1;L-K)}(x))]-\E[F(z^{(L+1;L-K-1)}(x))]\nonumber
     \\&\nonumber
     +\E[F(z^{(L+1;1)}(x))]-\E[F(\Zcal^{(L+1)}(x))],
\end{align}
where we used the convention that \(z^{(L+1;L+1)}(x)=z^{(L+1)}(x)\). 
Observe that, by definition, the weights of \(z^{(L+1;1)}(x)\) are all Gaussian, and as such, we can bound the difference 
\[
    \E[F(z^{(L+1;1)}(x))]-\E[F(\Zcal^{(L+1)}(x))],
\]
using tools from Gaussian calculus. We are left now with estimating the differences
\[
    \bigg|\E[F(z^{(L+1;L-K)}(x))]-\E[F(z^{(L+1;L-K-1)}(x))]\bigg|,
\]
for \(K=1,\ldots,L-1\). When \(K=0\), we can condition on \(z^{(L)}(x)\)  and apply \textbf{Proposition \ref{prop:Full_Lind_Prin}} to get a bound of the form
\[
        |\E[F(z^{(L+1)}(x))]-\E[F(\tilde{z}^{(L+1;L)}(x))]|\lesssim C_F\cdot n_{L+1}^3\cdot\frac{1}{n_{L}^{3/2}}\E\biggl[\sum_{i=1}^{n_L}\sigma((z^{(L)}(x))_i)^3\biggr].
\]
A bound of this form also appears in \cite{Hanin2023} in Lemma $2.5$. Under our assumption, \(\sigma\) is bounded and as such we obtain
\[
    |\E[F(z^{(L+1)}(x))]-\E[F(\tilde{z}^{(L+1;L)}(x))]|\lesssim n_{L+1}^3\cdot\frac{1}{\sqrt {n_{L}}}.
\]
The problem arises when we want to apply \textbf{Proposition \ref{prop:Full_Lind_Prin}} to deeper layers. Indeed, for \(K>0\), by conditioning on \(z^{(L-K)}(x)\), and then applying \textbf{Proposition \ref{prop:Full_Lind_Prin}} we get a bound of the form

\begin{equation}\label{eq:naive_bound}
    \bigg|\E[F(z^{(L+1;L-K)}(x))]-\E[F(z^{(L+1;L-K-1)}(x))]\bigg|\lesssim n_{L-K+1}^3\cdot\frac{1}{\sqrt{n_{L-K}}},
\end{equation}
which goes to \(0\) only if \(n_{L-K}\gg n_{L-K+1}^{6}\). As such, to get a better bound, we must exploit the recursive structure of the network.

Let us look at the case $K=0$. We need to bound 
\[
    \E[F(z^{(L+1;L)}(x))]-\E[F(z^{(L+1;L-1)}(x))].
\]
Both networks have Gaussian weights at layer $L+1$, and they have matching weights up to layer $L-1$. Conditioning at the layer output $z^{(L-1)}$ yields 
\[
    \E \left[F(z^{(L+1;L)}(x)) \right]=\E\left[\E \left[F(G^{(L+1)}\sigma(W^{(L)}\sigma(z^{(L-1)}(x))))\bigg|z^{(L-1)}(x) \right]\right],
\]
and 
\[
     \E \left[F(z^{(L+1;L-1)}(x)) \right]=\E\left[\E \left[F(G^{(L+1)}\sigma(G^{(L)}\sigma(z^{(L-1)}(x)))\bigg|z^{(L-1)}(x) \right]\right],
\]
where we recall that the weights $G^{(L+1)}$, $G^{(L)}$ are Gaussian. To compare these last two quantities, we must apply the Lindeberg switching principle to 
\[
   W^{(L)}\sigma(z^{(L-1)}(x)),
\]
and replace $W^{(L)}$ by $G^{(L)}$ with a small error. The point is that, after conditioning on \(z^{(L-1)}(x)\) we are looking at a very specific function of the weights $W^{(L)}=(w^{(L)}_{i,j})_{i=1,\dots n_{L+1},j=1,\dots n_{L}}$, which is given by
\[
    \Fcal_L(x):=\E_{G^{(L+1)}}[F(G^{(L+1)}\sigma(x))],
\]
with \(x\in\R^{n_{L}}\).
It turns out that, when we follow the proof of the classical Lindeberg principle to this function, we can use this Gaussian averaging in the definition of \(\Fcal_{L+1}\) along with the scaling implicit in \(G^{(L+1)}\) to get an extra decay in \(n_{L}\) which will 'beat' the factor \(n_{L}^3\) in \eqref{eq:naive_bound} and will lead to a better bound than the generic one in \textbf{Proposition \ref{prop:Full_Lind_Prin}}.

The situation for \(K\geq1\) is a bit more complicated. Indeed, in that case, we can condition on  \(z^{(L-K)}(x)\), to get a function of the weights we want to 'switch' and can be defined recursively in \(K\) as
\[
    \Fcal_{L,K}(x)=\E[\Fcal_{L,K-1}(G^{(L-K+1)}\sigma(x))],
\]
with \(\Fcal_{L,-1}(x):=F(x)\). The issue here is that for \(K\geq1\), \(\Fcal_{L-K}\) is a function of \(n_{L-K}\) variables which will introduce an extra diverging factor in our estimates. Luckily, the same idea as before still works: using the fact that \(\Fcal_{L-K}\) is an appropriate average of \(F\) composed with Gaussian random variables, we will be able to get an extra decaying factor in \(n_{L-K}\) times higher derivatives of \(\Fcal_{L,K-1}\). We then continue recursively to get a bound that does not contain a diverging factor in the inner widths. We refer to \textbf{Theorem \ref{thm:DNN_Lindeberg_switching}} for more details. 

We point out that the recursion hinted at in the previous paragraph is the reason that we need a high regularity assumption on \(\sigma\). If, on the other hand, we include Gaussian biases, we can run this recursion much more effectively by estimating third derivatives of \(\Fcal_{L,K}\) by an extra decaying factor in \(n_{L-K}\) times third derivatives of \(\Fcal_{L,K-1}\). As such, we only need \(\sigma\in C^{3}_b(\R)\) in this case. For more details, look at \textbf{Theorem \ref{thm:improved_Lind_switching}}. 

The previous arguments can be used to prove \textbf{Theorem \ref{thm:main-weak-bound}}, which can be seen as a Lindeberg principle for fully connected deep neural networks. To get a bound for the Wasserstein distance, we will argue as follows. First, we make use of classical theorems bounding the \(W_2\) distance of a sum of i.i.d. random variables to a Gaussian random variable. In particular, we make use of \cite{1d_Wass_bound}. By conditioning on \(z^{(L)}(x)\) and applying the main result of \cite{1d_Wass_bound}, we get the bound
\[
    \Wcal_2(z^{(L+1)}(x),z^{(L+1;L)}(x))\lesssim \frac{1}{\sqrt{n_L}}.
\]
Therefore, we need to estimate \(\Wcal_2(\Zcal^{(L+1)}(x),z^{(L+1;L)}(x))\). We write
\[
    \Wcal_2(\Zcal^{(L+1)}(x),z^{(L+1;L)}(x))\leq\Wcal_2(z^{(L+1;1)}(x),z^{(L+1;L)}(x))+\Wcal_2(\Zcal^{(L+1)}(x),z^{(L+1;1)}(x)).
\]
For the second term, we will use the main result of \cite{MR4898096}, since \(z^{(L+1;1)}\) has Gaussian weights. For the first term, we observe that \(z^{(L+1;1)}(x)\) and \(z^{(L+1;L)}(x)\) are conditionally Gaussian random variables. As such, we can bound their \(\Wcal_2-\)distance using the following theorem, proved in \cite{entr_bounds_cond_gaussian}\footnote{For the sake of completeness, we reproduce the arguments of \cite{entr_bounds_cond_gaussian} proving Theorem \ref{th:entropic-bound} in Appendix \ref{app:Cond_Gauss_proof}.}.

\begin{theorem}\label{th:entropic-bound}
Let \(K\) be a deterministic symmetric positive definite \(d\times d\) matrix, and let
\(A\) be a random symmetric positive semidefinite \(d\times d\) matrix.
Let \(N\sim\mathcal N_d(0,I_d)\) be independent of \(A\), and define
\[
F:=\sqrt A\,N,
\qquad
G:=\sqrt K\,N.
\]
Then
\[
W_2(F,G)
\le
\frac{1}{\sqrt{\lambda(K)}}\,
\Bigl(\E\|A-K\|_{HS}^2\Bigr)^{1/2}
+
\frac{2^{3/2}d^{1/4}}{\lambda(K)^{3/2}}\,
\Bigl(\E\|A-K\|_{HS}^4\Bigr)^{1/2},
\]
where \(\lambda(K)\) is the smallest eigenvalue of \(K\).
\end{theorem}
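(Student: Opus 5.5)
We prove the bound with an explicit coupling: $F$ and $G$ share the same Gaussian vector $N$. This gives
\[
W_2(F,G)^2\le \E\bigl\|(\sqrt A-\sqrt K)N\bigr\|^2=\E\|\sqrt A-\sqrt K\|_{HS}^2,
\]
where the equality follows by first conditioning on $A$, using the independence of $A$ and $N$ and $\E[NN^T]=I_d$. The task is therefore to control the Hilbert--Schmidt distance between the matrix square roots by $\|A-K\|_{HS}$. The difficulty is that $A$ may be singular or far from $K$, so we cannot rely on a global Lipschitz bound with a good constant. We handle this with a first-order expansion around $K$, using the spectral gap $\lambda(K)>0$, plus a remainder.

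\textbf{Step 1 (linearization).} Let $D=\sqrt A-\sqrt K$ and $\Delta=A-K$. Squaring $\sqrt A=\sqrt K+D$ gives the identity
\[
\sqrt K D+D\sqrt K=\Delta-D^2 .
\]
The map $\mathcal L(X)=\sqrt K X+X\sqrt K$ is invertible on symmetric matrices. In the eigenbasis of $K$ it acts entrywise by multiplication by $\sqrt{\lambda_i}+\sqrt{\lambda_j}\ge 2\sqrt{\lambda(K)}$, so $\|\mathcal L^{-1}\|_{HS\to HS}\le 1/(2\sqrt{\lambda(K)})$. Writing $D=\mathcal L^{-1}\Delta-\mathcal L^{-1}(D^2)$ then yields
\[
\|D\|_{HS}\le \frac{\|\Delta\|_{HS}}{2\sqrt{\lambda(K)}}+\frac{\|D^2\|_{HS}}{2\sqrt{\lambda(K)}}.
\]

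\textbf{Step 2 (controlling the quadratic remainder).} We need $\|D^2\|_{HS}\lesssim \|\Delta\|_{HS}^2/\lambda(K)$ up to dimensional factors. For this we use the operator-monotone estimate
\[
\|\sqrt A-\sqrt K\|_{op}\le \frac{\|A-K\|_{op}}{\sqrt{\lambda(K)}},
\]
which holds because $\lambda_{\min}(\sqrt A+\sqrt K)\ge\sqrt{\lambda(K)}$, and then apply the Sylvester-equation argument to $\sqrt A D+D\sqrt K=\Delta$. Combining this with $\|D^2\|_{HS}\le \|D\|_{op}\|D\|_{HS}$ and $\|D\|_{HS}\le \sqrt d\,\|D\|_{op}$, we get
\[
\|D^2\|_{HS}\le \sqrt d\,\frac{\|\Delta\|_{HS}^2}{\lambda(K)}.
\]
Inserting this into Step 1, taking second moments and applying Minkowski's inequality gives
\[
\bigl(\E\|D\|_{HS}^2\bigr)^{1/2}\le \frac{(\E\|\Delta\|_{HS}^2)^{1/2}}{2\sqrt{\lambda(K)}}+\frac{\sqrt d\,(\E\|\Delta\|_{HS}^4)^{1/2}}{2\lambda(K)^{3/2}}.
\]
This has the claimed structure, with $1/\sqrt{\lambda(K)}$ in front of the first term and $\lambda(K)^{-3/2}$ in front of the fourth-moment term.

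\textbf{Main obstacle: the dimensional power.} The expected difficulty is obtaining $d^{1/4}$ rather than the cruder $\sqrt d$ from Step 2. To sharpen it, we first try interpolating $\|D\|_{op}\|D\|_{HS}\le \|D\|_{HS}^{3/2}\|D\|_{op}^{1/2}$. A second option is to split on the event $\{\|\Delta\|_{op}\le\lambda(K)/2\}$ and its complement. On that event, the Neumann-type series for $\sqrt{K+\Delta}$ converges and the remainder can be bounded directly. On the complement, Markov's inequality together with the crude global bound $\|\sqrt A-\sqrt K\|_{HS}^2\le d^{1/2}\|\Delta\|_{HS}\cdot(\ldots)$ is absorbed into the fourth-moment term. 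We would adjust the constants $2^{3/2}$ and $d^{1/4}$ at this stage. If neither refinement works, a version with $\sqrt d$ would still suffice for the paper's application, since $d=n_{L+1}$ is fixed.
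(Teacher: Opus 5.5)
Your Steps 1 and 2 are correct, but what they establish is
\[
W_2(F,G)\le \frac{(\E\|A-K\|_{HS}^2)^{1/2}}{2\sqrt{\lambda(K)}}+\frac{\sqrt d\,(\E\|A-K\|_{HS}^4)^{1/2}}{2\,\lambda(K)^{3/2}} .
\]
This right-hand side is not dominated by the one in the statement once $\sqrt d/2>2^{3/2}d^{1/4}$, i.e.\ for $d>2^{10}$ (take $A=K+tI_d$ with $t$ large). So, as written, you prove a weaker inequality. It is enough for the application, as you note, but it is not the theorem. Your final paragraph does not close this gap. The interpolation $\|D\|_{op}\|D\|_{HS}\le\|D\|_{HS}^{3/2}\|D\|_{op}^{1/2}$ cannot lower the power of $d$ without an independent bound on $\|D\|_{HS}$. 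In the event-splitting option, the ``crude global bound'' on the complement is left as $(\ldots)$, and that is exactly where the missing ingredient sits.

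The missing ingredient, which the paper uses, is the Powers--St\o rmer inequality $\|\sqrt A-\sqrt K\|_{HS}^2\le\|A-K\|_{S_1}\le\sqrt d\,\|A-K\|_{HS}$. Set $E=\{\|A-K\|_{op}\le\lambda(K)/2\}$ and use $\one_{E^c}\le(2\|A-K\|_{HS}/\lambda(K))^3$. This gives $\E[\|D\|_{HS}^2\one_{E^c}]\le 8\sqrt d\,\lambda(K)^{-3}\E\|A-K\|_{HS}^4$, hence exactly the factor $2^{3/2}d^{1/4}$ after taking square roots. On $E$, your own Steps 1--2 already give a dimension-free bound: there $\|D\|_{op}\le\sqrt{\lambda(K)}/2$, so $\|D^2\|_{HS}\le\frac{\sqrt{\lambda(K)}}{2}\|D\|_{HS}$ can be absorbed, yielding $\|D\|_{HS}\le\frac23\|A-K\|_{HS}/\sqrt{\lambda(K)}$.

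None of this machinery is actually needed. You invoke the identity $\sqrt A\,D+D\sqrt K=A-K$ only in operator norm. Read it instead in the eigenbases $A=U\,\mathrm{diag}(\mu_i)\,U^T$ and $K=V\,\mathrm{diag}(\lambda_j)\,V^T$. It then says $(\sqrt{\mu_i}+\sqrt{\lambda_j})(U^TDV)_{ij}=(U^T(A-K)V)_{ij}$, with $\sqrt{\mu_i}+\sqrt{\lambda_j}\ge\sqrt{\lambda(K)}$. Hence $\|\sqrt A-\sqrt K\|_{HS}\le\|A-K\|_{HS}/\sqrt{\lambda(K)}$ for every realization of $A$. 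The first term alone then bounds $W_2(F,G)$, with no linearization, remainder, or event splitting, and the theorem follows because the fourth-moment term is nonnegative.
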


It turns out the right-hand side of the above bound can be estimated using the bounds obtained in \textbf{Section \ref{sec:Lindeberg_bounds}}. This allows us to conclude.

\section{Lindeberg switching trick}\label{sec:Lindeberg_bounds}

In this section, we prove the Lindeberg principle in the context of DNNs, i.e., prove \textbf{Theorem \ref{thm:main-weak-bound}}. We split our main results into two cases: zero biases and Gaussian biases. 
In the case of zero biases, the main technical estimate is contained in the following Theorem:

\begin{theorem}\label{thm:DNN_Lindeberg_switching}
    Let \(z^{(L+1)}(x)\) be the output of a DNN with \( L\) hidden layers in the sense of \textbf{Definition} \ref{def:Deep_Neural_Network}. Assume that \( \sigma\in C^{3\cdot 2^{L-1}}_b(\R)\), where \(\sigma\) is the activation function, and that the weights are centered with finite third moment. Then for \(F\in C^{3\cdot 2^{L-1}}_b(\R^{n_{L+1}})\), we have
    \[
        \bigg|\E[F(z^{(L+1;L-K)}(x))]-\E[F(z^{(L+1;L-K-1)}(x))]\bigg|\lesssim \frac{1}{\sqrt{n_{L-K-1}}} { , \quad \forall K \in \{ -1, \dots, L-2\} \,,}
    \]
    where \(z^{(L;L-K)}(x)\) is as in \textbf{Definition \ref{def:switched_layers_DNN}}.
\end{theorem}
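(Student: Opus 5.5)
The plan is to fix $K$ and condition on the common part of the two networks. Set $m=L-K$, so that the networks $z^{(L+1;m)}$ and $z^{(L+1;m-1)}$ differ only in the weights of layer $m$: $W^{(m)}$ in the first and $G^{(m)}$ in the second. All later layers are Gaussian in both, and the layers up to $m-1$ coincide. Conditioning on $y:=\sigma(z^{(m-1)}(x))\in\R^{n_{m-1}}$, which is bounded by $\|\sigma\|_\infty$ since $\sigma$ is bounded, reduces both expectations to
\[
\E\bigl[\Fcal(W^{(m)}y)\bigr]\quad\text{versus}\quad \E\bigl[\Fcal(G^{(m)}y)\bigr],
\]
where $\Fcal=\Fcal_{L,K-1}$ is the Gaussian-averaged function from Section \ref{sec:Proof_sketch}. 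It is defined recursively by $\Fcal_{L,-1}=F$ and
\[
\Fcal_{L,j}(u)=\E\bigl[\Fcal_{L,j-1}(G^{(L-j+1)}\sigma(u))\bigr].
\]
The vector $W^{(m)}y$ has independent rows. I would perform the Lindeberg swap one weight at a time, replacing $w_{ij}^{(m)}$ by $g_{ij}^{(m)}$. First and second moments match, so each swap costs a third-order Taylor remainder of size $\E|w_{ij}|^3|y_j|^3\,\sup|\partial_i^3\Fcal|\lesssim n_{m-1}^{-3/2}\sup|\partial_i^3\Fcal|$. Summing over $j$ gives $n_{m-1}^{-1/2}\sum_{i=1}^{n_m}\sup|\partial_i^3\Fcal|$. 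Only pure third derivatives in a single coordinate $i$ appear, because a weight $w_{ij}$ enters only coordinate $i$. This is already better than the $d^3$ of Proposition \ref{prop:Full_Lind_Prin}. So everything reduces to proving
\[
\sum_{i=1}^{n_m}\sup_u|\partial_i^3\Fcal_{L,K-1}(u)|\lesssim 1
\]
uniformly in the widths. For $K=-1$ (the output layer) $\Fcal=F$ and the sum is bounded by $n_{L+1}$, which is a fixed constant.

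The key estimate is a derivative bound propagated through the recursion. Write $\Fcal_{L,j}(u)=\E[\Phi(G\sigma(u))]$ with $G$ an $n'\times n$ Gaussian matrix of variance $C_W/n$ and $\Phi=\Fcal_{L,j-1}$. Differentiating in $u_i$ three times brings down the column $G_{\cdot i}$, whose entries are of size $n^{-1/2}$. A crude bound therefore gives $n^{-3/2}\sum_{a,b,c}\sup|\partial_{abc}\Phi|$, plus lower-order terms with factors $\sigma''$ and $\sigma'''$. Those lower terms involve only one or two factors of $G_{\cdot i}$, and this is the main obstacle: summed over $i\le n$, they do not decay. To gain decay I would use Gaussian integration by parts (Stein's identity) on the column $G_{\cdot i}$. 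Since $\E G_{ai}=0$, a term such as $\E[G_{ai}\,\partial_a\Phi(G\sigma(u))]\sigma''(u_i)$ equals $\frac{C_W}{n}\sigma(u_i)\sigma''(u_i)\,\E[\partial_{aa}\Phi]$. Each integration by parts trades one factor $n^{-1/2}$ for an additional factor $n^{-1/2}\sigma(u_i)$ together with one more derivative of $\Phi$. After integrating by parts, every term carries at least $n^{-3/2}$, or $n^{-1}$ with the $\sigma'$ and $\sigma$ factors paired appropriately, and summing over $i$ gives $O(n^{-1/2})$ or $O(1)$ times $\Lambda_r(\Phi)$-type quantities with $r\le 6$ derivatives.

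Each level of the recursion thus doubles the number of derivatives needed, from $3$ to $6$, and so on. After $L-1$ levels this requires $\Phi=F$ and $\sigma$ to lie in $C_b^{3\cdot2^{L-1}}$, which explains the hypothesis. So I would prove by induction on $j$ a statement of the form
\[
\sum_{a_1,\dots,a_r}\sup|\partial_{a_{1:r}}\Fcal_{L,j}|\lesssim \text{const}\qquad\text{for } r\le 3\cdot2^{L-1-j}.
\]
The sum is over multi-indices. Mixed derivatives in distinct coordinates $a\neq b$ involve the independent columns $G_{\cdot a}$ and $G_{\cdot b}$ and each can be integrated by parts, so an $r$-fold sum over $n^r$ indices is compensated by $n^{-r/2}$ from the explicit column factors and by a further $n^{-r/2}$ from integration by parts. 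Diagonal terms are fewer in number, which offsets their weaker decay. Doing this bookkeeping correctly, with a generic Faà di Bruno expansion and careful counting, is where I expect the real work to lie. Once the induction closes, plugging it into the swap estimate yields the bound $n_{m-1}^{-1/2}=n_{L-K-1}^{-1/2}$.
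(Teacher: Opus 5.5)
Your proposal is correct in outline and follows the paper's architecture: condition on $\sigma(z^{(L-K-1)}(x))$, absorb the later Gaussian layers into the averaged test functions, run a Lindeberg swap, and close with an induction on summed derivative norms in which each level doubles the number of derivatives needed. It differs from the paper in two technical choices.

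\textbf{Entry-wise swap.} You swap single entries $w_{ij}$ rather than whole column vectors $\bX_i$. Since $w_{ij}$ moves only coordinate $i$, the swap needs only $\sum_i\sup|\partial_i^3\Fcal|$ instead of the full $\Lambda_3$. This is a genuine but inessential sharpening. The lower levels of your induction still need full multi-index sums through the powers $\Delta^k$, so the regularity $3\cdot 2^{L-1}$ does not improve.

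\textbf{Derivative bounds.} You differentiate inside the expectation and then integrate by parts column by column. The paper avoids this bookkeeping with one observation. Because the later weights are i.i.d.\ Gaussian, $G\sigma(u)$ has the law of $\sqrt{q_K(u)}\,Z$, where $q_K(u)=\frac{C_W}{n}\sum_i\sigma(u_i)^2$. Hence the averaged function equals $\Phi(q_K(u)/2)$ with $\Phi(t)=P_t\Fcal_{L,K-1}(0)$, and $\Phi^{(k)}$ is given by $P_t\Delta^k\Fcal_{L,K-1}(0)$. Faà di Bruno then shows at once that every block of a partition has all indices equal and carries a factor $C_W/n$. That is exactly the ``each block gains $1/n$, diagonal terms are fewer'' counting you defer to later.

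Your Stein identity is the same mechanism as $\partial_tP_t=\Delta P_t$, applied one coordinate at a time. Done carefully, it reproduces exactly the Faà di Bruno terms, with Wick pairings inside a column supplying the $\delta_{bb'}\,C_W/n$ factors. Its advantage is that it uses only entry-wise Gaussian integration by parts, not the exact identity in law with $\sqrt{q_K(u)}\,Z$. The paper's reduction, in turn, makes the combinatorics a one-liner.

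\textbf{Index slips.} The function composed with $W^{(L-K)}y$ is $\Fcal_{L,K}$, a function on $\R^{n_{L-K}}$, not $\Fcal_{L,K-1}$. The induction range should be $r\le 3\cdot 2^{L-2-j}$. As you wrote it, $j=-1$ would require $F\in C^{3\cdot 2^{L}}$, which contradicts your own correct count of $3\cdot 2^{L-1}$ derivatives.
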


\begin{proof}
For \(K=-1\), we may use the usual Lindeberg exchange principle, \textbf{Proposition \ref{prop:Full_Lind_Prin}}. For \(K=0,\ldots, L-2\) we inductively define the following functions \(\Fcal_{L,K}:\R^{n_{L-K}}\to \R\)
\begin{equation}\label{eq:averaged_test_function}
    \Fcal_{L,K}(x)=\E[\Fcal_{L,K-1}(G^{(L-K+1)}\sigma(x))], \qquad K\geq0,
\end{equation}
with \(\Fcal_{L,-1}\equiv F\).

With this definition, we can write
\[
    \E[F(z^{(L+1;L-K)}(x))]=\E[\Fcal_{L,K}(W^{(L-K)}\sigma(z^{(L-K-1)}(x)))] \,,
\]
and
\[
    \E[F(z^{(L+1;L-K-1)}(x))]=\E[\Fcal_{L,K}(G^{(L-K)}\sigma(z^{(L-K-1)}(x))))] \,,
\]
If \(\sigma\in C^{3\cdot 2^{L-1}}_b(\R)\) and \(F\in C^{3\cdot 2^{L-1}}_b(\R^{n_{L+1}})\) then \(\Fcal_{L,K}\in C^{3\cdot 2^{L-1}}_b(\R^{n_{L-K}})\). To estimate 
\begin{align}\label{eq:basic_Lind_comparison}
    \bigg|\E[F(z^{(L+1;L-K)}(x))]-\E[F(z^{(L+1;L-K-1)}(x))]\bigg|=
\end{align}
\[
    \bigg|\E[\Fcal_{L,K}(W^{(L-K)}\sigma(z^{(L-K-1)}(x)))]-\E[\Fcal_{L,K}(G^{(L-K)}\sigma(z^{(L-K-1)}(x)))]\bigg| \,,
\]
we will estimate 
\begin{equation}\label{eq:aux_Lind_diff}
    \E[\Fcal_{L,K}(W^{(L-K)}\sigma(y))]-\E[\Fcal_{L,K}(G^{(L-K)}\sigma(y))] \,
\end{equation}
uniformly in  \(y\in \R^{n_{L-K-1}}\). The latter follows the basic steps of the proof of the classical Lindeberg switching principle closely, see for example \cite{zygouras_discrete_stochanal_notes}.

We define the random vectors \(\bX_i\), \(\bY_i\in\R^{n_{L-K}}\), \(i=1,\ldots,n_{L-K-1}\) with components
\begin{equation}
\label{eq:def_X_coordinates}
    \bX_i(j)=\frac{1}{\sqrt{n_{L-K-1}}}w_{i,j}^{(L-K)}\sigma(y_j),\quad\text{and}\quad \bY_i(j)=\frac{1}{\sqrt{n_{L-K-1}}}g_{i,j}^{(L-K)}\sigma(y_j).
\end{equation}
With this notation we can rewrite \eqref{eq:aux_Lind_diff} as 
 \begin{equation}\label{eq:diff_rewrite}
        \biggl|\E\biggl[\Fcal_{L,K}\biggl(\sum_{i=1}^{n_{L-K-1}}\bX_i\biggr)\biggr]-\E\biggl[\Fcal_{L,K}\biggl(\sum_{i=1}^{n_{L-K-1}}\bY_i\biggr)\biggr]\biggr|\,,
 \end{equation}
To estimate this, we define
\begin{equation}\label{eq:sum_for_tel}
        \bS_{n_{L-K-1};K}:=\sum_{i=1}^{n_{L-K-1}-m}\bX_i+\bY_{n_{L-K-1}-m+2}+\bY_{n_{L-K-1}-m+3}+\dots+\bY_{n_{L-K-1}}.
\end{equation}
Since,
\[
    \biggl|\E\biggl[\Fcal_{L,K}\biggl(\sum_{i=1}^{n_{L-K-1}}\bX_i\biggr)\biggr]-\E\biggl[\Fcal_{L,K}\biggl(\sum_{i=1}^{n_{L-K-1}}\bY_i\biggr)\biggr]\biggr|\leq
\]
\[
    \sum_{m=1}^{n_{L-K-1}}|\E[\Fcal_{L,K}(\bS_{n_{L-K-1};k}+\bX_{n_{L-K-1}-m+1})]-\E[\Fcal_{L,K}(\bS_{n_{L-K-1};m}+Y_{n_{L-K-1}-m+1})]| \,,
\]
it is sufficient to bound the distance
\begin{equation}\label{eq:basic_diff}
        |\E[\Fcal_{L,K}(\bS_{n_{L-K-1};m}+\bX_{n_{L-K-1}-m+1})]-\E[\Fcal_{L,K}(\bS_{n_{L-K-1};m}+Y_{n_{L-K-1}-m+1})]| \,.
\end{equation}
From Taylor, we get
\[
    \Fcal_{L,K}(\mathbf{x}+\mathbf{h})=\Fcal_{L,K}(\mathbf{x})+\sum_{a=1}^{n_{L-K}}\partial_a\Fcal_{L,K}(\mathbf{x})h_a+\frac{1}{2}\sum_{a,b=1}^{n_{L-K}}\partial^2_{a,b}\Fcal_{L,K}(\mathbf{x})h_ah_b+R(\mathbf{x};\mathbf{h}) \,.
\]
where
\[
R(\mathbf{x};\mathbf{h})=\sum_{a,b,c=1}^{n_{L-K}}\Rcal_{a,b,c}(\mathbf{x})h_ah_bh_c \,,
\]
with 
\[
    \Rcal_{a,b,c}(\mathbf{x})=\frac{1}{2}\int_0^1(1-t)^2\partial^3_{a,b,c}\Fcal_{L,K}(\mathbf{x}+t\mathbf{h})dt \,.
\]
Now, since the components of \(\bX_{n_{L-K-1-m+1}}\) and \(\bY_{n_{L-K-1-m+1}}\) have matching first and second moments, \eqref{eq:basic_diff} is bounded above by
\[
    \E[|R(\bS_{n_{L-K-1};k};\bX_{n_{L-K-1}-m+1}))|]+\E[|R(\bS_{n_{L-K-1};k};\bY_{n_{L-K-1}-m+1}))|].
\]
Assuming that 
\begin{equation}\label{eq:technical_estimate}
    \sum_{a,b,c=1}^{n_{L-K}}\sup_{x\in\R^{n}}|\partial_{a,b,c}\Fcal_{L,K}(x)|\lesssim 1,
\end{equation}
it is straightforward to estimate these terms. For example recalling \eqref{eq:def_X_coordinates}
\begin{align*}
    \E[|R(\bS_{n_{L-K-1};m},\bX_{n_{L-K-1}-m+1})|] & \lesssim 
   \sum_{a,b,c=1}^{n_{L-K}}\sup_{x\in\R^{n}}|\partial_{a,b,c}\Fcal_{L,K}(x)| \\
   & \cdot \E \left[ \left|\bX_{n_{L-K-1}-m+1}(a)\bX_{n_{L-K-1}-m+1}(b)
   \cdot\bX_{n_{L-K-1}-m+1}(c) \right| \right] 
   \\
   & \lesssim \frac{1}{n_{L-K-1}^{3/2}} \,,
\end{align*}
where we used H\"older's inequality and the fact that \(\sigma\) is bounded.
Similarly, using the identity
\[
    \E[|Y|^p]=\E[|Y|^2]^{p/2}2^{p/2}\frac{\Gamma(\frac{p+1}{2})}{\sqrt{\pi}}.
\]
where \(Y\) is a Gaussian mean zero random variable with variance \(1\), we get the same bound for $\E[|R(\bS_{n_{L-K-1};m},\bX_{n_{L-K-1}-m+1})|]$, which implies that \eqref{eq:basic_diff} is \(O(1/n_{L-K-1}^{3/2})\). Summing over \(m\) implies that \eqref{eq:aux_Lind_diff} is \(O(1/\sqrt{n_{L-K-1}})\), uniformly in \(y\). This proves that \eqref{eq:basic_Lind_comparison} is \(O(1/\sqrt{n_{L-K-1}})\).

Therefore, we are left to show \eqref{eq:technical_estimate}. Recalling \eqref{eq:der_sum_quantities}, it is sufficient to show that 
\begin{equation}\label{eq:der_estimate}
    \Lambda_3(\Fcal_{L,K})\lesssim 1.
\end{equation}
We do this via a recursion on \(K\). We will show
\begin{equation}\label{eq:basic_rec}
    \Lambda_{r}(\Fcal_{L,K})\lesssim \sum_{j=2}^{2r}\Lambda_j(\Fcal_{L,K-1}),
\end{equation}
for all \(r\in \N\), and \(\sigma\in C^{r}_b(\R)\). Since \(\Fcal_{L,-1}=F\), we have \(\Lambda_r(F)\lesssim 1\), for all \(r=1,\ldots,3\cdot2^{L-1}\), we can conclude.

Recalling the recursive definition  in \eqref{eq:averaged_test_function}, we can write 
 \[
        \Fcal_{L,K}(x)= P_{q_K(x)/2}\Fcal_{L,K-1}(0)=: \Phi(q_K(x)/2),
    \]
    where \(P_t= e^{t\Delta}\) and \(\Phi(t):=P_t\Fcal_{L,K-1}(0)\) and 
    \begin{equation}\label{eq:q_definition}
        q_K(x):=\frac{C_W}{n_{L-K}}\sum_{i=1}^{n_{L-K}}\sigma(x_i)^2.
    \end{equation}
     Indeed, we have \(\Fcal_{L,K}(x)=\E[\Fcal_{L,K-1}(\sqrt{q_K(x)}Z)]\), where \(Z\) is an \( n_{L-K+1}-\)dimensional vector with independent Gaussian entries of mean zero and variance \(1\).
     
    Now, we observe that
    \begin{equation}\label{eq:Phi_deriv}
        \frac{d^k}{d^kt}\Phi(t)= \E[\Delta^k\Fcal_{L,K-1}(\sqrt{t}Z)].
    \end{equation}
    Now, we have
    \begin{equation}\label{eq:Faa_di_Bruno}
        \partial_{a_{1:r}}\Fcal_{L,K}(x)=\sum_{k=1}^r\frac{d^k}{d^kt}\Phi(q(x))\sum_{\pi\in \Pi_k}\prod_{B\in\pi}\partial_{a_{B}}q(x),
    \end{equation}
    where we used the Fa\'a di Bruno formula \cite{MR507345}. We recall that \(\Pi_k\) denotes the set of partitions of \(\{1,\ldots,r\}\) into \(k\) parts,  and \(\partial_{a_B}:=\prod_{i\in B}\partial_{a_i}\). Recalling \eqref{eq:q_definition}, we see that, for \(B=\{i_1,\dots,i_{|B|}\}\)
    \[
        \partial_{a_B}q(x)=\frac{C_W}{n_{L-K}}\frac{d^{|B|}}{dx^{|B|}}(\sigma(x_{a_{i_1}})^2)\mathbb{1}_{\{a_{i_1}=a_{i_2}\ldots=a_{i_{|B|}}\}}.
    \]
    since \(\sigma\in C_b^{2r}(\R)\), this identity implies that
    \begin{equation}\label{eq:combinatorial_est}
        \biggl|\sum_{a_{1:r}=1}^{n_{L-K}}\sum_{\pi\in \Pi_k}\prod_{B\in\pi}\partial_{a_{B}}q(x)\biggr|\lesssim1.
    \end{equation}
    Now, by plugging \eqref{eq:Phi_deriv} to \eqref{eq:Faa_di_Bruno}, summing over \(a_{1:r}\), and using the above estimate we have
    \[
        \Lambda_r(\Fcal_{L,K})\lesssim \sum_{k=1}^r\sup_{x}\E[|\Delta^k \Fcal_{L,K-1}(\sqrt {q(x)}Z)]\lesssim \sum_{k=1}^{2r}\Lambda_{k}(\Fcal_{L,K-1}),
    \]
    which concludes the proof.

\end{proof}

When the DNN has additional Gaussian biases in each layer, we can avoid this recursion by leveraging the extra smoothing afforded by the biases. In particular, we have the following theorem.

\begin{theorem}\label{thm:improved_Lind_switching}
    Let \(z^{(L+1)}\) be the output of a DNN with \(L\) hidden layers with Gaussian bias as in \eqref{eq:Gaussian_biases} and with activation function \(\sigma\in C_b^3(\R)\). Then
    \[
        \bigg|\E[F(z^{(L+1;L-K)}(x))]-\E[F(z^{(L+1;L-K-1)}(x))]\bigg|\lesssim \|F\|_{\infty}\frac{1}{\sqrt{n_{L-K-1}}} { , \quad \forall K \in \{ -1, \dots, L-2\} \,,}
    \]
    if \(F\in L^\infty(\R^{n_{L+1}})\). If \(F\in C^1_b(\R^{n_{L+1}})\) the same bound holds with \(\|\nabla F\|_{\infty}\) instead of \(\|F\|_\infty\).
\end{theorem}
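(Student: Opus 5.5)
I follow the same skeleton as the proof of \textbf{Theorem \ref{thm:DNN_Lindeberg_switching}}. For \(K\ge 0\) define the averaged test functions \(\Fcal_{L,K}:\R^{n_{L-K}}\to\R\) by
\[
\Fcal_{L,K}(x)=\E\bigl[\Fcal_{L,K-1}(G^{(L-K+1)}\sigma(x)+b^{(L-K+1)})\bigr],\qquad \Fcal_{L,-1}=F .
\]
Condition on \(z^{(L-K-1)}(x)\); since the bias \(b^{(L-K)}\) is independent of everything else, it can be absorbed into the averaged function at the switched layer. The Lindeberg telescoping over the \(n_{L-K-1}\) column vectors \(\bX_i,\bY_i\), together with moment matching and the third-order Taylor remainder, then reduces everything to the single estimate
\[
\Lambda_3\bigl(\tilde\Fcal_{L,K}\bigr)\lesssim \|F\|_\infty \quad(\text{resp. } \|\nabla F\|_\infty),\qquad \tilde\Fcal_{L,K}(y):=\E\bigl[\Fcal_{L,K}(y+b^{(L-K)})\bigr].
\]
The function \(\tilde\Fcal_{L,K}\) is the Gaussian convolution of \(\Fcal_{L,K}\) with \(\Ncal(0,c_bI)\), i.e.\ \(P_{c_b/2}\Fcal_{L,K}\). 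Given this estimate, each of the \(n_{L-K-1}\) swaps costs \(O(n_{L-K-1}^{-3/2})\), because \(\sigma\) is bounded and the third moments are finite. Summing over the swaps gives \(O(n_{L-K-1}^{-1/2})\) uniformly in \(y\). The case \(K=-1\) is the same with \(\Fcal_{L,-1}=F\) smoothed by \(b^{(L+1)}\), in fixed dimension \(n_{L+1}\).

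The key step is a bound on \(\Lambda_3\) that does not grow with the width \(n_{L-K}\). Note that \(\Fcal_{L,K}\) depends on \(x\) only through the scalar \(q_K(x)=\frac{C_W}{n_{L-K}}\sum_i\sigma(x_i)^2\). Conditionally on \(x\), the vector \(G^{(L-K+1)}\sigma(x)+b^{(L-K+1)}\) is \(\Ncal\bigl(0,(q_K(x)+c_b)I\bigr)\). Hence
\[
\Fcal_{L,K}(x)=\Phi\bigl(q_K(x)+c_b\bigr),\qquad \Phi(s):=\E\bigl[\Fcal_{L,K-1}(\sqrt{s}Z)\bigr].
\]
Because \(s\ge c_b>0\), the derivatives of \(\Phi\) can be computed by the Gaussian integration-by-parts/score formula
\[
\frac{d^k}{ds^k}\E[f(\sqrt s Z)]=\E\bigl[f(\sqrt sZ)\,H_k(Z,s)\bigr],
\]
where \(H_k\) is a polynomial in \(Z\) (a Hermite-type functional of \(|Z|^2\)) with \(s^{-k}\) factors. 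This gives \(|\Phi^{(k)}(s)|\lesssim c_b^{-k}\,\|\Fcal_{L,K-1}\|_\infty\cdot C(n_{L-K+1},k)\). The dimension-dependent constant is a problem, since \(n_{L-K+1}\) is a hidden width. To avoid it, I would work with the one-dimensional law of \(|Z|^2\sim\chi^2_{d}\) and the radial structure: \(\Phi\) is an expectation against a scale family \(\Ncal(0,sI_d)\), whose density \(s\mapsto\)\ law satisfies \(\|\partial_s^k p_s\|_{TV}\lesssim (d/s^2)^{k/2}\) only roughly. This is exactly where the factor must be compensated. Then apply Fa\`a di Bruno exactly as in \eqref{eq:Faa_di_Bruno}–\eqref{eq:combinatorial_est}, where only \(\sigma\in C^3_b\) is needed. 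Every block derivative of \(q_K\) carries \(1/n_{L-K}\) and is diagonal, so \(\sum_{a_{1:3}}\prod_B|\partial_{a_B}q_K|\lesssim 1\). This yields \(\Lambda_3(\Fcal_{L,K})\lesssim\sup_{s\ge c_b}\sum_{k\le3}|\Phi^{(k)}(s)|\lesssim\|F\|_\infty\), using \(\|\Fcal_{L,K-1}\|_\infty\le\|F\|_\infty\) (averaging is a contraction). No recursion in the order of derivatives is needed. This is why \(C^3_b\) suffices, and the \(\|F\|_\infty\) version follows. For the \(\|\nabla F\|_\infty\) version, I differentiate once less: \(\Phi'(s)=\frac{1}{2\sqrt s}\E[Z\cdot\nabla\Fcal_{L,K-1}(\sqrt sZ)]\). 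Then \(\|\nabla\Fcal_{L,K}\|\) is controlled by \(\|\nabla\Fcal_{L,K-1}\|\) times \(\|\nabla q_K\|\cdot|\,\cdot\,|\), which is bounded.

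The main obstacle is controlling \(\Phi^{(k)}\) independently of the inner dimension \(d=n_{L-K+1}\). A naive score bound gives \(d^{k/2}\)-type factors, because \(|Z|^2\approx d\) fluctuates at scale \(\sqrt d\). I would first try to exploit that all the relevant functions are themselves radial-in-variance averages: at the next level, \(\Fcal_{L,K-1}(\sqrt sZ)\) depends on \(Z\) only through \(q_{K-1}(\sqrt sZ)\), an empirical mean of \(d\) i.i.d.\ terms. Its \(s\)-derivatives are therefore of order \(O(1)\), with the \(1/d\) normalization cancelling the \(d\) terms. The estimate can then be run as a two-level induction that only ever tracks scalar functions \(\Phi_K\) on \([c_b,\infty)\). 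For the outermost layer, \(d=n_{L+1}\) is fixed and the constant may depend on it, matching \(C(L,n_{L+1},\sigma)\).
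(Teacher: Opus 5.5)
Your final paragraph is the paper's argument. Writing $\Fcal_{L,K}=\Phi_K(q_K+c_b)$ and running the width-free recursion $\Phi_K(s)=\E\bigl[\Phi_{K-1}\bigl(q_{K-1}(\sqrt{s}Z)+c_b\bigr)\bigr]$ on $[c_b,\infty)$ is, up to shifting the variable by $c_b$, exactly \eqref{eq:phi_recursion}--\eqref{eq:M_r_recursion}: the $s$-derivatives of the inner empirical mean have moments bounded uniformly in the width, and the argument closes with Fa\`a di Bruno and the fixed-dimension base case at layer $L+1$.

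Two of your intermediate claims should be dropped. First, the contraction $\|\Fcal_{L,K-1}\|_\infty\le\|F\|_\infty$ alone does not bound $\Phi_K^{(k)}$; for $K\ge1$ it leaves $n_{L-K+1}$-dependent constants, as you then note yourself. Second, the $\|\nabla F\|_\infty$ version should come only from changing the base estimate, e.g.\ $\Phi_0^{(k)}(s)=\int\bigl(F(y)-F(0)\bigr)\partial_s^k p_s(y)\,dy$ with $p_s$ the density of $\Ncal(0,sI_{n_{L+1}})$. Propagating gradient norms through the layers instead picks up ratios like $\sqrt{n_{L-K+1}/n_{L-K}}$ and only controls first derivatives.
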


\begin{proof}
    We follow the same steps as in the proof of \textbf{Theorem \ref{thm:DNN_Lindeberg_switching}}. First, we define 'biased' versions of the functions \(\Fcal_{L,K}\), defined in \eqref{eq:averaged_test_function}:
    \begin{equation}\label{eq:biased_averaged_function}
        \Fcal_{L,K}^{\rm biased}(x):=\E[\Fcal_{L,K-1}^{\rm biased}(G^{(L-K+1)}\sigma(x)+b^{(L-K+1)})], \qquad K\geq0,
    \end{equation}
    with \(\Fcal_{L,-1}^{\rm biased}(x) \equiv F\). We also define 
    \begin{equation}\label{eq:double_averaged_test_function}
        \widetilde\Fcal_{L,K}^{\rm biased}(x) := \E [\Fcal^{\rm biased}_{L,K}(x+b^{(L-K)})]= P_{c_b^2/2}\Fcal^{\rm biased}_{L,K}(x).
    \end{equation}
   Observe that \(\widetilde \Fcal^{\rm biased}_{L,K}\) is a smooth function for any measurable activation function \(\sigma\) and observable \(F\). Arguing similarly to the proof of \textbf{Theorem \ref{thm:DNN_Lindeberg_switching}}, it suffices to show
   \[
        \bigl|\E[\widetilde\Fcal^{\rm biased}_{L,K}(W^{(L-K)}\sigma(y))]-\E[\widetilde\Fcal_{L,K}^{\rm biased}(G^{(L-K)}\sigma(y))]\bigr|\lesssim \frac{1}{\sqrt{n_{L-K-1}}},
   \]
    which will follow, using the same arguments, from the following estimate
    
    \begin{equation}\label{eq:biased_technical_estimate}
        \sum_{a,b,c=1}^{n_{L-K}}\sup_{x\in\R^{n}}|\partial_{a,b,c}\widetilde\Fcal^{\rm biased}_{L,K}(x)|\lesssim \begin{cases}
            \|F\|_{\infty}, \quad\textit{if }\; F\in L^\infty(\R^{n_{L+1}})\\
            \\
            \|\nabla F\|_{\infty},\quad \textit{if }\; F\in C^1_b(\R^{n_{L+1}}),
            
        \end{cases}
    \end{equation}
   which is the 'biased' version of the bound \eqref{eq:technical_estimate}. 
   To show \eqref{eq:biased_technical_estimate}, we want to follow the same steps as in the proof of \eqref{eq:technical_estimate} in the previous proof, but in a way that avoids estimating quantities like \(\Lambda_r(\Fcal_{L,K})\), \(r=1,2,3\) by above by \(\Lambda_{r'}(\Fcal_{L,K})\) with \(r'> r\)  which requires more derivatives in \(\sigma\).
   
   To do this, we write
    \[
        \Fcal_{L,K}^{\rm biased}(x)= \phi_{L,K}(q_{K}(x)),
    \]
    where \(\phi_{L,K}(s)=P_{s+c_b^2/2}\Fcal^{\rm biased}_{L,K-1}(0)\), and \(q_{K}(x)\) is as in \eqref{eq:q_definition}. Using \eqref{eq:biased_averaged_function}, it is easy to see that \(\phi_{L,K}\) satisfy the following recursion 
    \begin{equation}\label{eq:phi_recursion}
        \phi_{L,K}(s) = \E\bigg[\phi_{L,K-1}\bigg(\frac{C_W}{n_{L-K+1}}\sum_{i=1}^{n_{L-K+1}}g_s(U_i)\bigg)\bigg],\quad U_i\sim\Ncal(0,1),\, \textit{\rm independent},
    \end{equation}
    where 
    \begin{equation}
        g_s(u):= \sigma\left(\left(\sqrt{s+c_b^2}\right)u \right)^2,
    \end{equation}
    with \(c_b\) the variance of the bias \(b^{(L-K+1)}\), specified in \eqref{eq:Gaussian_biases}. Using this recursion, we will prove that
    \begin{equation}\label{eq:biased_der_estimate}
        \Lambda_r(\phi_{L,K})=\sup_{s\ge 0}\bigl|\phi_{L,K}^{(r)}(s)\bigr|\lesssim\begin{cases}
            \|F\|_{\infty}, \quad\textit{if }\; F\in L^\infty(\R^{n_{L+1}})\\
            \\
            \|\nabla F\|_{\infty},\quad \textit{if }\; F\in C^1_b(\R^{n_{L+1}})
            
        \end{cases}
        \qquad r=1,2,3,
    \end{equation}
    for all \(K\geq1\). With this bound, \eqref{eq:biased_technical_estimate} follows by arguing similarly to the proof of \eqref{eq:der_estimate}. Indeed, we have
    \[
        \partial_{a_{1:3}}\Fcal_{L,K}^{\rm biased}(x)=\sum_{r=1}^3\frac{d^r}{d^rt}\phi_{L,K}(q_K(x))\sum_{\pi\in \Pi_3}\prod_{B\in\pi}\partial_{a_{B}}q_K(x),
    \]
    where we once again used the Fa\'a di Bruno formula. Now, from \eqref{eq:combinatorial_est} and \eqref{eq:biased_der_estimate} we get 
    \[
        \sum_{a,b,c=1}^{n_{L-K}}\sup_{x\in\R^{n}}|\partial_{a,b,c}\Fcal^{\rm biased}_{L,K}(x)|\lesssim \begin{cases}
            \|F\|_{\infty}, \quad\textit{if }\; F\in L^\infty(\R^{n_{L+1}})\\
            \\
            \|\nabla F\|_{\infty},\quad \textit{if }\; F\in C^1_b(\R^{n_{L+1}}).
            
        \end{cases}
    \]
    Since \(\tilde\Fcal_{L,K}(x)\) is given by \eqref{eq:double_averaged_test_function}, this proves \eqref{eq:biased_technical_estimate}.
    
    As mentioned, to prove \eqref{eq:biased_der_estimate}, we will make use of the recursion \eqref{eq:phi_recursion}. First, we record some preliminary estimates: A straightforward calculation implies that, since \(c_b>0\) and \(\sigma\in C^{3}_b(\R)\)
    \begin{equation}\label{eq:der_g_s_bounds}
        |\partial_s g_s(u)|\lesssim 1+|u|,
        \qquad
        |\partial_s^2 g_s(u)|\lesssim 1+|u|^2,
        \qquad
        |\partial_s^3 g_s(u)|\lesssim 1+|u|^3,
    \end{equation}
    for all $s\ge 0$ and $u\in\mathbb R$.
    Hence, if $U\sim N(0,1)$, then for every fixed $p\ge 1$,
    \begin{equation}\label{eq:averaged_g_s_der_bounds}
        \sup_{s\ge 0}\mathbb E|\partial_s g_s(U)|^p<\infty,
        \qquad
        \sup_{s\ge 0}\mathbb E|\partial_s^2 g_s(U)|^p<\infty,
        \qquad
        \sup_{s\ge 0}\mathbb E|\partial_s^3 g_s(U)|^p<\infty.
    \end{equation}
    Using the bounds \eqref{eq:averaged_g_s_der_bounds} and Jensen's inequality, we get for every fixed $p\ge 1$,
    \begin{equation}\label{eq:A_s_bounds}
        \sup_{s\ge 0,}\mathbb E\bigg|\frac{1}{n_{L-K+1}}\sum_{i=1}^{n_{L-K+1}} \partial_s^rg_s(U_i)\bigg|^p<\infty,
    \end{equation}
    for \(r=1,2,3\).
    Using these bounds, the chain rule and the recursion \eqref{eq:phi_recursion}, it is easy to see that for \(\sigma\in C_b^3(\R)\)
    \begin{equation}\label{eq:M_r_recursion}
       \sum_{r=1}^3 \Lambda_r(\phi_{L,K})\lesssim \sum_{r=1}^3 \Lambda_r(\phi_{L,K-1}),
    \end{equation}
    Finally, observe that we can use the following bounds 
    \[
        \Lambda_r(\phi_{L,0})\lesssim\begin{cases}
            \|F\|_{\infty}, \quad\textit{if }\; F\in L^\infty(\R^{n_{L+1}})\\
            \\
            \|\nabla F\|_{\infty},\quad \textit{if }\; F\in C^1_b(\R^{n_{L+1}}),
            
        \end{cases}
    \]
    for \(r=1,2,3\).  This proves \eqref{eq:biased_der_estimate} and concludes the proof.
\end{proof}

\begin{remark}\label{rem:Applicability}
    The previous theorems compare the output of a DNN with Gaussian weights in the last \(k\) layers to the output of a DNN with Gaussian weights in the last \(k+1\) layers. For our purposes, we will also need to estimate
    \[
        \bigg|\E[F(z^{(L;L-K)}(x))]-\E[F(z^{(L;L-K-1)}(x))]\bigg|.
    \]
    As usual the problem is that \( z^{(L;L-K)}(x)\) is an \(n_L-\)dimensional vector, and that \(n_L\to\infty\). It is clear that in this case, there is no hope of estimating the above quantity uniformly in \(n_L\), for general \( F\). Luckily, there is no need. As an application of \textbf{Theorem \ref{th:entropic-bound}} below, we will only need to consider functions \(F\) that depend on a fixed number of components of \(z^{(L;L-K)}(x)\). For such functions, we can apply \textbf{Theorem \ref{thm:DNN_Lindeberg_switching}} to obtain
    \begin{equation}\label{eq:second_to_last_Lind_switching}
        \bigg|\E[F(z^{(L;L-K)}_{i_1}(x),\ldots,z^{(L;L-K)}_{i_\alpha}(x))]-\E[F(z^{(L;L-K-1)}_{i_1}(x),\ldots, z^{(L;L-K-1)}_{i_\alpha}(x))]\bigg|\leq C\frac{1}{\sqrt{n_{L-K-1}}},
    \end{equation}
    where \(i_1,\dots i_\alpha\) are some fixed indices and \(C= C(L,\alpha,\sigma)\) some constant.
\end{remark}

Now, \textbf{Theorem \ref{thm:main-weak-bound}} can be proved by a simple telescoping sum trick, as described in \textbf{Section \ref{sec:Proof_sketch}}:

\begin{proof}[Proof of \textbf{Theorem \ref{thm:main-weak-bound}}]
    We can write
    \begin{align*}
        \E[F(z^{(L+1)}(x))]-\E[F(\Zcal^{(L+1)}(x))]=&
     \sum_{K=-1}^{L-2}\E[F(z^{(L+1;L-K)}(x))]-\E[F(z^{(L+1;L-K-1)}(x))]\nonumber
     \\&
     +\E[F(z^{(L+1;1)}(x))]-\E[F(\Zcal^{(L+1)}(x))].
    \end{align*}
    Applying \textbf{Theorem \ref{thm:DNN_Lindeberg_switching}} in the case of zero biases or \textbf{Theorem \ref{thm:improved_Lind_switching}} in the case of Gaussian biases, yields that the sum in the above equation is bounded above by \(C\sum_{K=0}^{L-1}\frac{1}{\sqrt{n_{L-K}}}\). From \cite[Theorem \(3.3\)]{MR4898096}, we have
    \[
        \E[F(z^{(L+1;1)}(x))]-\E[F(\Zcal^{(L+1)}(x))]\lesssim \sum_{K=0}^{L-1}\frac{1}{\sqrt{n_{L-K}}},
    \]
    which concludes the proof.
\end{proof}

\section{Proof of \textbf{Theorem \ref{thm:W_2_bound}} and \textbf{Theorem \ref{thm:Improved_W_2_bound}}}\label{sec:W_2_bounds_proofs}
In this section, we prove \textbf{Theorem \ref{thm:W_2_bound}}. The proof of \textbf{Theorem \ref{thm:Improved_W_2_bound}} is similar, only we use \textbf{Theorem \ref{thm:improved_Lind_switching}} instead of \textbf{Theorem \ref{thm:DNN_Lindeberg_switching}}. The rest of the steps are the same.

We follow the sketch described in \textbf{Section \ref{sec:Proof_sketch}}. As a first step, we want to replace the weights in the last layer by mean-zero Gaussian weights with matching variances, with a small \(\Wcal_2\)-error. First, we need the following lemma

\begin{lemma}\label{lem:conditional-w2}
Let \(d\in\N\), let \(X,Y\) be square-integrable \(\R^d\)-valued random variables, and let \(\Fcal\) be a sub-\(\sigma\)-field. Then
\[
\Wcal_2(X,Y)^2
\le
\E\Bigl[
\Wcal_2\bigl(\mathcal L(X\mid\Fcal),\mathcal L(Y\mid\Fcal)\bigr)^2
\Bigr].
\]
\end{lemma}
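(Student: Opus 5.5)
The plan is to glue together optimal couplings of the conditional laws, chosen measurably in the conditioning variable. I would first work on a standard Borel setting. Since $X$ and $Y$ are $\R^d$-valued, regular conditional distributions exist: there are probability kernels $\omega\mapsto\mu_\omega=\mathcal L(X\mid\Fcal)(\omega)$ and $\omega\mapsto\nu_\omega=\mathcal L(Y\mid\Fcal)(\omega)$, both $\Fcal$-measurable. Square integrability of $X$ and $Y$, through the tower property, gives that $\mu_\omega$ and $\nu_\omega$ have finite second moments for a.e. $\omega$. Hence $\Wcal_2(\mu_\omega,\nu_\omega)$ is finite almost surely and the right-hand side makes sense; if it is infinite there is nothing to prove.

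Next, for each $\omega$ I choose an optimal coupling $\pi_\omega\in\Pi(\mu_\omega,\nu_\omega)$, which exists by lower semicontinuity and tightness. The key point is that this choice can be made measurably in $\omega$. I would invoke the standard measurable selection result for optimal transport (Villani, \emph{Optimal Transport: Old and New}, Corollary 5.22). The map $(\mu,\nu)\mapsto$ optimal plans admits a measurable selection, and composing it with the measurable map $\omega\mapsto(\mu_\omega,\nu_\omega)$ gives an $\Fcal$-measurable kernel $\omega\mapsto\pi_\omega$. This measurability issue is the only real obstacle. If one prefers to avoid the selection theorem, an alternative is an $\varepsilon$-optimal countable-partition argument: approximate $(\mu_\omega,\nu_\omega)$ on a countable measurable partition of $\Fcal$ and let $\varepsilon\to0$. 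I would rely on the cited corollary.

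Then I define the probability measure
\[
\pi(A)=\E\bigl[\pi_\omega(A)\bigr],\qquad A\subset\R^d\times\R^d \text{ Borel}.
\]
Its first marginal is $\E[\mu_\omega]=\mathcal L(X)$ and its second is $\E[\nu_\omega]=\mathcal L(Y)$, both by the tower property. So $\pi\in\Pi(X,Y)$, and therefore
\[
\Wcal_2(X,Y)^2\le\int|x-y|^2\,d\pi=\E\Bigl[\int|x-y|^2\,d\pi_\omega\Bigr]=\E\bigl[\Wcal_2(\mu_\omega,\nu_\omega)^2\bigr].
\]
The middle equality follows from Tonelli's theorem applied to the nonnegative integrand, extending from indicators to nonnegative functions by monotone convergence. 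This concludes the argument.
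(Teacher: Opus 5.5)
Your proposal is correct and follows essentially the same argument as the paper: you select the optimal couplings $\omega\mapsto\pi_\omega$ of the conditional laws measurably via Villani's Corollary 5.22, and glue them into $\pi(A)=\E[\pi_\omega(A)]$, a coupling of $\mathcal L(X)$ and $\mathcal L(Y)$ whose cost is $\E\bigl[\Wcal_2(\mu_\omega,\nu_\omega)^2\bigr]$. Your additional remarks (existence of regular conditional distributions, a.s. finite conditional second moments, the tower-property check of the marginals, and Tonelli) spell out details that the paper leaves implicit.
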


\begin{proof}
For \(\omega\in\Omega\), write
\[
\mu_\omega:=\mathcal L(X\mid\Fcal)(\omega),
\qquad
\nu_\omega:=\mathcal L(Y\mid\Fcal)(\omega).
\]
Choose a probability kernel
\(\omega\mapsto \pi_\omega\) such that, for a.e.\ \(\omega\), \(\pi_\omega\) is an
optimal coupling of \(\mu_\omega\) and \(\nu_\omega\). By measurable selection \cite[Corollary \(5.22\)]{Villani2009}, we may choose \(\omega\to \pi_\omega\) to be measurable. As such, we can define a probability
measure \(\pi\) on \(\R^d\times\R^d\) by
\[
\pi(A):=\E[\pi_\omega(A)],
\qquad A\in\mathcal B(\R^d\times\R^d).
\]
Then \(\pi\) is a coupling of \(\mathcal L(X)\) and \(\mathcal L(Y)\). Hence
\[
\Wcal_2(X,Y)^2
\le
\int_{\R^d\times\R^d} |x-y|^2\,d\pi(x,y)
=
\E\Bigl[
\int_{\R^d\times\R^d} |x-y|^2\,d\pi_\omega(x,y)
\Bigr].
\]
Since \(\pi_\omega\) is optimal,
\[
\int_{\R^d\times\R^d} |x-y|^2\,d\pi_\omega(x,y)
=
\Wcal_2(\mu_\omega,\nu_\omega)^2
\]
for a.e.\ \(\omega\). This gives the claim.
\end{proof}

\begin{lemma}\label{lem:last_layer_replacement}
If \(z^{(L+1)}(x)\) is the output of a DNN with zero biases, weights satisfying \textbf{Assumption \ref{assum:weights}} and a bounded activation function, we have
\[
\Wcal_2\bigl(z^{(L+1)}(x), z^{(L+1;L)}(x)\bigr)
\lesssim
\sqrt{\frac{n_{L+1}}{n_L}},
\]
where \(z^{(L+1;L)}(x)\) is as in \textbf{Definition \ref{def:switched_layers_DNN}}. The same bound is true if \(z^{(L+1)}(x)\) has Gaussian biases, according to \textbf{Assumption \ref{assum:biases}}.
\end{lemma}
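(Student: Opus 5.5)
Condition on \(\Fcal:=\sigma(z^{(L)}(x))\), i.e.\ on everything up to layer \(L\); by Lemma \ref{lem:conditional-w2},
\[
\Wcal_2\bigl(z^{(L+1)}(x),z^{(L+1;L)}(x)\bigr)^2\le \E\Bigl[\Wcal_2\bigl(\mathcal L(z^{(L+1)}(x)\mid\Fcal),\mathcal L(z^{(L+1;L)}(x)\mid\Fcal)\bigr)^2\Bigr].
\]
This requires that \(z^{(L+1)}\) and \(z^{(L+1;L)}\) share the same first \(L\) layers. By Definition \ref{def:switched_layers_DNN} only the weights of layer \(L+1\) are replaced, so we may realise both on the same probability space with common \(z^{(L)}(x)\). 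Given \(\Fcal\), set \(v:=\sigma(z^{(L)}(x))\in\R^{n_L}\), which is deterministic with \(\|v\|_\infty\le\|\sigma\|_\infty\). The \(n_{L+1}\) output coordinates \(z^{(L+1)}_i=\sum_j w^{(L+1)}_{i,j}v_j(+b_i)\) are then conditionally independent, and so are the Gaussian ones \(\sum_j g_{i,j}v_j(+b_i)\). With biases, the same bias \(b^{(L+1)}\) can be added to both (synchronous coupling), which does not increase \(\Wcal_2\), so it suffices to treat zero bias. Since the coordinates are independent, the product of coordinatewise optimal couplings gives
\[
\Wcal_2^2\le\sum_{i=1}^{n_{L+1}}\Wcal_2\Bigl(\textstyle\sum_j w_{i,j}v_j,\ \sum_j g_{i,j}v_j\Bigr)^2,
\]
which reduces the problem to a one-dimensional statement about a single row.

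For one row, write \(\sum_j w_{i,j}v_j=\sqrt{C_W/n_L}\sum_j \xi_j v_j\) with \(\xi_j\sim\mu\) i.i.d., and note that the Gaussian counterpart is exactly \(\Ncal(0,C_W|v|^2/n_L)\). We need a quantitative CLT in \(\Wcal_2\) for weighted sums of i.i.d.\ variables with finite third moment; this is where \cite{1d_Wass_bound} enters, as announced in Section \ref{sec:Proof_sketch}. The form I expect to use, a Rio-type bound for independent non-identically distributed summands \(X_j=\xi_jv_j\), is
\[
\Wcal_2\Bigl(\sum_j X_j,\ \Ncal\bigl(0,\textstyle\sum_j\E X_j^2\bigr)\Bigr)\lesssim \frac{\sum_j\E|X_j|^3}{\sum_j\E X_j^2}.
\]
Rescaling by \(\sqrt{C_W/n_L}\) gives, for each row,
\[
\Wcal_2\lesssim \sqrt{\tfrac{C_W}{n_L}}\,\E|\xi|^3\,\frac{\sum_j|v_j|^3}{|v|^2}\le\sqrt{\tfrac{C_W}{n_L}}\,\E|\xi|^3\,\|v\|_\infty\lesssim\frac{1}{\sqrt{n_L}}.
\]
The key point is \(\sum_j|v_j|^3\le\|v\|_\infty|v|^2\): this is uniform in \(v\), even when \(|v|\) is small, and degenerate \(v=0\) gives distance zero. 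Summing over the \(n_{L+1}\) rows gives a conditional bound \(\lesssim n_{L+1}/n_L\) uniformly in \(\omega\). Taking expectations and square roots then yields \(\sqrt{n_{L+1}/n_L}\).

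The main obstacle is the precise form of the one-dimensional \(\Wcal_2\) CLT. If \cite{1d_Wass_bound} only covers i.i.d.\ summands, the weights \(v_j\) make them non-identically distributed. In that case I would fall back on the version for independent non-identical summands with the ratio of third to second moment sums (Rio/Bobkov), or on a Stein-kernel/zero-bias coupling bound for \(\Wcal_2\) of the same form. The uniformity in \(v\), which needs bounded \(\sigma\), is what lets Lemma \ref{lem:conditional-w2} close the argument without any moment control on \(z^{(L)}\).
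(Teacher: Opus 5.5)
The gap is the one-dimensional input. Everything around it matches the paper's proof: you condition on $z^{(L)}(x)$ and apply Lemma \ref{lem:conditional-w2}, couple the $n_{L+1}$ conditionally independent rows coordinatewise, bound each row uniformly using $\|\sigma\|_\infty$, and dispose of the biases and of $v=0$.

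The inequality $\Wcal_2\bigl(\sum_jX_j,\Ncal(0,\sum_j\E X_j^2)\bigr)\lesssim\sum_j\E|X_j|^3/\sum_j\E X_j^2$ is the Berry--Esseen bound for $\Wcal_1$ (e.g.\ via zero-bias coupling). It is false for $\Wcal_2$. As far as I know, your fallbacks do not supply it either: Rio's and Bobkov's $\Wcal_p$ bounds at rate $n^{-1/2}$ use moments of order $p+2$, and zero-bias or Stein-kernel couplings in $\Wcal_2$ also cost $\E\xi^4$.

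In fact no third-moment substitute can exist. Take $\sigma=2+\tanh$ (so $1\le\sigma\le 3$), $n_{L+1}=1$, and $\mu$ symmetric with $\mu(|x|>t)\asymp t^{-\alpha}$ for some $3<\alpha<4$. A single large weight gives $P(|z^{(L+1)}(x)|>t)\gtrsim n_L^{1-\alpha/2}t^{-\alpha}$ for $t\ge 1$. Meanwhile $z^{(L+1;L)}(x)$ is a mixture of centered Gaussians with variance at most $9C_W$. Comparing second moments beyond the level $t\asymp\sqrt{\log n_L}$ gives $\Wcal_2\bigl(z^{(L+1)}(x),z^{(L+1;L)}(x)\bigr)^2\gtrsim (n_L\log n_L)^{1-\alpha/2}$, which is much larger than $n_L^{-1}$.

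The paper instead uses Bonis's fourth-moment bound $\Wcal_2(S_n,G)^2\lesssim\sum_i\E X_i^4/\sum_i\E X_i^2$. With $X_j=\sqrt{C_W/n_L}\,\xi_jv_j$, your uniformity trick becomes $\sum_jv_j^4\le\|v\|_\infty^2|v|^2$. This gives $\Wcal_2^2\lesssim \E[\xi^4]\,\|\sigma\|_\infty^2\,C_W/n_L$ for each row, and the rest of your argument then goes through verbatim. Note that this requires $\int x^4\,\mu(dx)<\infty$. The paper's proof uses this tacitly, although Assumption \ref{assum:weights} only requires third moments. By the example above, the lemma genuinely needs that extra moment assumption.
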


\begin{proof}
We only prove the case of zero biases, as the case of Gaussian biases is identical.

First, recall a one-dimensional Wasserstein estimate, obtained by specializing a theorem of Bonis \cite[Theorem 1]{1d_Wass_bound}.
If \(S_n=\sum_{i=1}^n X_i\) is a sum of independent centered real-valued random variables and \(G\) is Gaussian with the same variance as \(S_n\), then
\begin{equation}\label{eq:1d_wass_bound}
\Wcal_2(S_n,G)^2\lesssim \frac{\sum_{i=1}^n \E[|X_i|^4]}{\sum_{i=1}^n \E[X_i^2]}.
\end{equation}
The implicit constant depends only on the universal constant in \cite{1d_Wass_bound}.

Let $\Fcal_L$ be the $\sigma$-algebra generated by $z^{(L)}(x)$, and set
\[
a_i:=\sqrt{\frac{C_W}{n_L}}\,\sigma\bigl(z_i^{(L)}(x)\bigr),
\qquad i=1,\dots,n_L.
\]
Then, conditionally on \(\Fcal_L\), the coefficients \((a_i)_{i=1}^{n_L}\) are
deterministic.

For each \(r=1,\dots,n_{L+1}\), write
\[
z_r^{(L+1)}(x)=\sum_{i=1}^{n_L} a_i\,w^{(L+1)}_{r,i} \,,
\qquad
z_r^{(L+1;L)}(x)=\sum_{i=1}^{n_L} a_i\,g^{(L+1)}_{r,i} \,,
\]
where \((w^{(L+1)}_{r,i})_{r,i}\) are i.i.d.\ with law \(\mu\) specified in \textbf{Assumption \ref{assum:weights}}, and
\((g^{(L+1)}_{r,i})_{r,i}\) are i.i.d.\ standard Gaussian random variables, all
independent of \(\Fcal_L\).

Set
\[
v:=\sum_{i=1}^{n_L} a_i^2
=
\frac{C_W}{n_L}\sum_{i=1}^{n_L}\sigma\bigl(z_i^{(L)}(x)\bigr)^2.
\]
If \(v=0\), then \(a_i=0\) for all \(i\), and therefore
\[
z^{(L+1)}(x)=z^{(L+1;L)}(x)=0
\]
almost surely conditionally on \(\Fcal_L\). Thus the conditional
\(\Wcal_2\)-distance is zero.

Assume now that \(v>0\). Fix \(r\in\{1,\dots,n_{L+1}\}\), and define
\[
X_{r,i}:=a_i\,w^{(L+1)}_{r,i},
\qquad i=1,\dots,n_L.
\]
Conditionally on \(\Fcal_L\), the variables \(X_{r,i}\) are independent,
centered, and satisfy
\[
\sum_{i=1}^{n_L}\E[X_{r,i}^2\mid\Fcal_L]=v.
\]
Moreover,
\[
z^{(L+1)}_r(x)=\sum_{i=1}^{n_L} X_{r,i}.
\]
Let \(G\sim\mathcal N(0,1)\), independent of everything else. Applying \eqref{eq:1d_wass_bound} (conditionally on \(\Fcal_L\)), we obtain
\[
\Wcal_2\bigl(\mathcal L(z^{(L+1)}_r(x)\mid\Fcal_L),\mathcal L(\sqrt v\,G\mid\Fcal_L)\bigr)^2
\lesssim
v^{-1}\sum_{i=1}^{n_L}\E[|X_{r,i}|^4\mid\Fcal_L]
\lesssim
\frac{\sum_{i=1}^{n_L} a_i^4}{\sum_{i=1}^{n_L} a_i^2}.
\]
Since \(z_r^{(L+1;L)}(x)\mid\Fcal_L\sim\mathcal N(0,v)\), this gives
\[
\Wcal_2\bigl(\mathcal L(z^{(L+1)}_r(x)\mid\Fcal_L),\mathcal L(z^{(L+1;L)}_r(x)\mid\Fcal_L)\bigr)^2
\lesssim
\frac{C_W}{n_L}
\frac{\sum_{i=1}^{n_L}\sigma\bigl(z_i^{(L)}(x)\bigr)^4}
     {\sum_{i=1}^{n_L}\sigma\bigl(z_i^{(L)}(x)\bigr)^2}
\,\one_{\{\sum_i \sigma(z_i^{(L)}(x))^2>0\}}.
\]

Now observe that, conditionally on \(\Fcal_L\), the coordinates
\((z_1^{(L+1)}(x),\dots,z_{n_{L+1}}^{(L+1)}(x))\) are independent, and the coordinates \((z_1^{(L+1;L)}(x),\dots,z_{n_{L+1}}^{(L+1;L)}(x))\) are
also independent. Hence
\[
\mathcal L\bigl(z^{(L+1)}(x)\mid\Fcal_L\bigr)
=
\bigotimes_{r=1}^{n_{L+1}} \mathcal L(z_r^{(L+1)}(x)\mid\Fcal_L),
\qquad
\mathcal L\bigl(z^{(L+1;L)}(x)\mid\Fcal_L\bigr)
=
\bigotimes_{r=1}^{n_{L+1}} \mathcal L(z_{r}^{(L+1;L)}(x)\mid\Fcal_L).
\]
Taking the product of optimal couplings in each coordinate, we get
\[
\Wcal_2\bigl(\mathcal L(z^{(L+1)}(x)\mid\Fcal_L),
             \mathcal L(z^{(L+1;L)}(x)\mid\Fcal_L)\bigr)^2
\le
\sum_{r=1}^{n_{L+1}}
\Wcal_2\bigl(\mathcal L(z_r^{(L+1)}(x)\mid\Fcal_L),\mathcal L(z_r^{(L+1;L)}(x)\mid\Fcal_L)\bigr)^2.
\]
Therefore
\[
\Wcal_2\bigl(\mathcal L(z^{(L+1)}(x)\mid\Fcal_L),
             \mathcal L(z^{(L+1;L)}(x)\mid\Fcal_L)\bigr)^2
\lesssim
\frac{n_{L+1}\,C_W}{n_L}
\frac{\sum_{i=1}^{n_L}\sigma\bigl(z_i^{(L)}(x)\bigr)^4}
     {\sum_{i=1}^{n_L}\sigma\bigl(z_i^{(L)}(x)\bigr)^2}
\,\one_{\{\sum_i \sigma(z_i^{(L)}(x))^2>0\}}.
\]
Since \(\sigma\) is bounded, on the event where the denominator is positive,
\[
\frac{\sum_i \sigma(z_i^{(L)}(x))^4}
     {\sum_i \sigma(z_i^{(L)}(x))^2}
\le
\|\sigma\|_\infty^2.
\]
Thus
\[
\Wcal_2\bigl(\mathcal L(z^{(L+1)}(x)\mid\Fcal_L),
             \mathcal L(z^{(L+1;L)}(x)\mid\Fcal_L)\bigr)^2
\lesssim
\frac{n_{L+1}}{n_{L}}.
\]
Finally, \textbf{Lemma \ref{lem:conditional-w2}} yields
\[
\Wcal_2\bigl(z^{(L+1)}(x), z^{(L+1;L)}(x)\bigr)^2
\lesssim
\E\Bigl[
\Wcal_2\bigl(\mathcal L(z^{(L+1)}(x)\mid\Fcal_L),
             \mathcal L(z^{(L+1;L)}(x)\mid\Fcal_L)\bigr)^2
\Bigr]
\lesssim
\frac{n_{L+1}}{n_L}.
\]
This proves the claim.
\end{proof}

Before moving on with the proof of \textbf{Theorem \ref{thm:W_2_bound}}, we need an additional lemma, which quantifies the rate of convergence of a DNN with Gaussian weights to its infinite width limit. We will only need this for special observables of the layer \(L\). In particular, we have the following lemma

\begin{lemma}\label{lem:gaussian-hidden-layer-to-gp}
Fix \(\alpha\in\{1,2,3,4\}\) and \(x\in\R^{n_0}\).
Assume that the infinite-width covariances are non-degenerate on the singleton input set \(\{x\}\) in the sense of \cite{trevisan2023widedeepneuralnetworks}.
Let
\[
\Phi_\alpha(u_1,\dots,u_\alpha):=\prod_{r=1}^\alpha \sigma(u_r)^2.
\]
Then there exists a constant \(C_{\alpha,x}<\infty\), independent of the widths \(n_1,\dots,n_{L-1}\), such that for every \(\alpha\)-tuple \((i_1,\dots,i_\alpha)\) of pairwise distinct indices in \(\{1,\dots,n_L\}\),
\begin{equation}\label{eq:gaussian-hidden-layer-to-gp}
\left|
\E\Bigl[\Phi_\alpha\bigl(z^{(L;1)}_{i_1}(x),\dots,z^{(L;1)}_{i_\alpha}(x)\bigr)\Bigr]
-
\E\Bigl[\Phi_\alpha(G_1,\dots,G_\alpha)\Bigr]
\right|
\le
C_{\alpha,x}\sum_{k=1}^{L-1}\frac{1}{n_k},
\end{equation}
where \(G_1,\dots,G_\alpha\) are i.i.d. centered Gaussian random variables with variance \(\Kcal^{(L)}(x)\).
\end{lemma}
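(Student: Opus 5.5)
The plan is to reduce the statement to a known quantitative result for Gaussian-weight networks, namely the rate for finite-dimensional distributions in \cite{trevisan2023widedeepneuralnetworks} (equivalently \cite[Theorem 3.3]{MR4898096}), applied to the \(\alpha\) selected coordinates of layer \(L\) of a fully Gaussian network. Here \(z^{(L;1)}\) has all weights Gaussian, so its \(L\)-th layer is a Gaussian network of depth \(L-1\) with output dimension \(n_L\). The key observation is that, by exchangeability and independence of the rows of \(G^{(L)}\), the vector \((z^{(L;1)}_{i_1}(x),\dots,z^{(L;1)}_{i_\alpha}(x))\) has the same law as the output of a network whose last layer has width \(\alpha\) and whose hidden widths are \(n_1,\dots,n_{L-1}\). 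Thus the quantity in \eqref{eq:gaussian-hidden-layer-to-gp} only involves a network with fixed output dimension \(\alpha\le 4\). Its limit is \(\mathcal N(0,\Kcal^{(L)}(x)I_\alpha)\), i.e.\ \((G_1,\dots,G_\alpha)\), since distinct output neurons are independent in the limit.

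The first step is to condition on \(z^{(L-1;1)}(x)\). Given this, \(z^{(L;1)}_{i_r}(x)=\sqrt{q}\,N_r\) with \(N_r\) i.i.d.\ standard Gaussians and the random variance
\[
q:=\frac{C_W}{n_{L-1}}\sum_{j=1}^{n_{L-1}}\sigma\bigl(z^{(L-1;1)}_j(x)\bigr)^2
\]
(plus \(c_b\) in the biased case). Hence
\[
\E\bigl[\Phi_\alpha(z^{(L;1)}_{i_{1:\alpha}}(x))\bigr]=\E[\psi(q)],\qquad \psi(t):=\bigl(\E[\sigma(\sqrt t N)^2]\bigr)^\alpha,
\]
while the right-hand side equals \(\psi(\Kcal^{(L)}(x))\). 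Since \(\sigma\in C^2_b\) (or by Gaussian integration by parts) and \(\Kcal^{(L)}(x)>0\), \(\psi\) is smooth near \(\Kcal^{(L)}(x)\). A second-order Taylor expansion gives
\[
|\E\psi(q)-\psi(\Kcal^{(L)})|\lesssim |\E q-\Kcal^{(L)}|+\E|q-\Kcal^{(L)}|^2 .
\]
Near \(t=0\), where \(\psi\) may be less regular in the zero-bias case, I would handle the term through the boundedness of \(\psi\) and a concentration bound on \(q\).

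The remaining task is to show \(|\E q-\Kcal^{(L)}(x)|\lesssim\sum_{k<L}1/n_k\) and \(\E|q-\Kcal^{(L)}|^2\lesssim\sum_{k<L}1/n_k\). These are exactly the mean and variance estimates for the ``collective observable'' \(q\) established for Gaussian networks in \cite{randomfullyconnectedneuralcumulants,trevisan2023widedeepneuralnetworks}, proved by induction on depth.
\begin{itemize}
\item The variance is \(O(1/n_{L-1})\) from the independence of the neurons given the previous layer, plus the inherited variance of the previous layer's \(q\), which Lipschitz dependence controls.
\item The mean has a \(1/n\) bias because \(\E[\sigma(\sqrt{q'}N)^2]\) is evaluated at the random previous variance \(q'\) rather than at its limit. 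Expanding to second order gives a first-order term controlled by \(|\E q'-\Kcal^{(L-1)}|\) and a second-order term controlled by \(\mathrm{Var}(q')\), both \(O(\sum 1/n_k)\) by induction.
\end{itemize}

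I expect the main obstacle to be obtaining the \(1/n\) rate rather than \(1/\sqrt n\). This requires the second-order expansion at every layer, so that the first moment of the fluctuation cancels at each stage. It also requires non-degeneracy of \(\Kcal^{(\ell)}(x)\) for all \(\ell\), to keep the smoothness constants of the maps \(t\mapsto\E[\sigma(\sqrt tN)^2]\) uniform along the recursion. If a direct citation of the theorem in \cite{trevisan2023widedeepneuralnetworks} for the smooth test function \(\Phi_\alpha\) applied to the finite collection of neurons is available, I would use it instead and skip the induction.
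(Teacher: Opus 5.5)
Your proof is correct, but its main line differs from the paper's. The paper uses exactly the shortcut you mention at the end. It notes that $\Phi_\alpha$ is bounded and Lipschitz, and identifies the $\alpha$ selected neurons in law with the output of a fully Gaussian network with $\alpha$ outputs. It then quotes \cite[Theorem 4.2]{trevisan2023widedeepneuralnetworks} for a $\Wcal_1$ bound of order $\sum_{k=1}^{L-1}1/n_k$, and concludes by Kantorovich--Rubinstein duality.

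You instead exploit the product structure of $\Phi_\alpha$. Conditioning on layer $L-1$, and using that distinct indices correspond to independent rows of $G^{(L)}$, collapses the expectation to $\E[\psi(q)]$ with $\psi=h^\alpha$ and $h(t)=\E[\sigma(\sqrt t N)^2]$; the target becomes $\psi(\Kcal^{(L)}(x))$. Everything then reduces to the bias and second moment of the scalar observable $q$, and your induction for these is sound. Let $q_\ell$ be the conditional variance at layer $\ell$. Then $\E q_\ell=C_W\E[h(q_{\ell-1})]$. Splitting $q_\ell-\Kcal^{(\ell)}$ orthogonally by conditioning on layer $\ell-2$ gives $\E|q_\ell-\Kcal^{(\ell)}|^2\lesssim 1/n_{\ell-1}+\E|q_{\ell-1}-\Kcal^{(\ell-1)}|^2$. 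A second-order expansion gives $|\E q_\ell-\Kcal^{(\ell)}|\lesssim|\E q_{\ell-1}-\Kcal^{(\ell-1)}|+\E|q_{\ell-1}-\Kcal^{(\ell-1)}|^2$.

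Boundedness of $h$ and $\psi$, together with $\Kcal^{(\ell)}>0$ and a Chebyshev-type split at $|q-K|>K/2$, gives global bounds $|h(q)-h(K)|\lesssim|q-K|$ and $|\psi(q)-\psi(K)-\psi'(K)(q-K)|\lesssim|q-K|^2$ for all $q\ge 0$. This disposes of the behaviour near $t=0$ exactly as you suggest. In your bullets, track $\E|q-\Kcal|^2$ rather than $\operatorname{Var}(q)$; the difference is the squared bias, which is of lower order.

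What your route buys is a self-contained, elementary proof that makes the $1/n$ rate transparent: the linear term only sees the bias, which is itself second order. It also needs much less than the paper's standing hypotheses. Bounded $\sigma$ plus non-degeneracy along the recursion suffices, with no role for $\alpha\le 4$ or for $\sigma\in C_b^{3\cdot 2^{L-1}}$, and the preliminary reduction to an $\alpha$-output network becomes superfluous once you condition. The paper's route is shorter and gives the stronger intermediate fact of $\Wcal_1$-closeness for all Lipschitz observables of the $\alpha$ neurons. The price is dependence on an external theorem whose hypotheses and rate have to be matched to this setting.
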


\begin{proof}
Since \(\sigma\in C_b^{3\cdot 2^{L-1}}(\R;\R)\), the function \(\Phi_\alpha\) is bounded and Lipschitz on \(\R^\alpha\).
Fix pairwise distinct indices \(i_1,\dots,i_\alpha\), and consider the random vector
\[
U_I(x):=\bigl(z^{(L;1)}_{i_1}(x),\dots,z^{(L;1)}_{i_\alpha}(x)\bigr).
\]
Because the rows of each Gaussian weight matrix are i.i.d., \(U_I(x)\) has the same law as the output at layer \(L\) of a fully Gaussian network of depth \(L\), evaluated at the single input \(x\), with output dimension \(\alpha\).
The corresponding Gaussian-process limit is the vector \((G_1,\dots,G_\alpha)\), whose coordinates are independent and distributed as \(\mathcal N(0,\Kcal^{(L)}(x))\).

Applying \cite[Theorem 4.2]{trevisan2023widedeepneuralnetworks} with \(p=1\), singleton input set \(X=\{x\}\), fixed output dimension \(\alpha\), and layer \(\ell=L\), we obtain
\[
\Wcal_1\bigl(U_I(x),(G_1,\dots,G_\alpha)\bigr)
\le
C_{\alpha,x}\sum_{k=1}^{L-1}\frac{1}{n_k}.
\]
Finally, by Kantorovich--Rubinstein duality,
\[
\left|\E[\Phi_\alpha(U_I(x))]-\E[\Phi_\alpha(G_1,\dots,G_\alpha)]\right|
\le
\|\Phi_\alpha\|_{\mathrm{Lip}}\,
\Wcal_1\bigl(U_I(x),(G_1,\dots,G_\alpha)\bigr),
\]
which gives \eqref{eq:gaussian-hidden-layer-to-gp}.
\end{proof}

\begin{proof}[Proof of \textbf{Theorem \ref{thm:W_2_bound}}]
From \textbf{Lemma \ref{lem:last_layer_replacement}}, it suffices to bound
\(\Wcal_2(\Zcal^{(L+1)}(x), z^{(L+1;L)}(x))\). From \textbf{Theorem \ref{th:entropic-bound}}, we have
\begin{equation}\label{eq:pre_w_2_bound}
    \Wcal_2(\Zcal^{(L+1)}(x), z^{(L+1;L)}(x))\le
    \frac{1}{\sqrt{\lambda(K)}}\,
    \Bigl(\E\|A-K\|_{HS}^2\Bigr)^{1/2}
    +
    \frac{2^{3/2}d^{1/4}}{\lambda(K)^{3/2}}\,
    \Bigl(\E\|A-K\|_{HS}^4\Bigr)^{1/2}.
\end{equation}
Here, 
\[
A = \frac{C_W}{n_{L}} \norm{\sigma(z^{L}(x))}_2^2 \text{I}_{n_{L+1}},
\]
which is the variance of \(z^{(L+1;L)}(x)\) conditioned on \(z^{(L)}(x)\), and \(\Kcal^{(L+1)(x)}\) is as in \textbf{Theorem \ref{thm:qualitative_thm}}.  

We have
\[
\begin{aligned}
\lefteqn{\E \left[ \norm{\frac{C_W}{n_{L}} \norm{\sigma(z^{L}(x))}_2^2 \text{I}_{n_{L+1}} - \Kcal^{(L+1)}(x) }_{HS}^p\right]} \qquad&\\
= & \E \left[ \norm{\frac{C_W}{n_{L}} \norm{\sigma(z^{L}(x))}_2^2 \text{I}_{n_{L+1}} - \E_{\Kcal^{(L)}(x)}[\sigma(\Zcal^{L}(x))^2] \text{I}_{n_{L+1}}}_{HS}^p\right] \\
=& \E \left[ \left( \sqrt{n_{L+1}} \left| \frac{C_W}{n_{L}} \norm{\sigma(z^{L}(x))}_2^2 - \E_{\Kcal^{(L)}(x)}[\sigma(\Zcal^{L}(x))^2] \right|\right)^p\right] \\
=& n_{L+1}^4 \E \left[ \left(  \left| \frac{C_W}{n_{L}} \norm{\sigma(z^{L}(x))}_2^2 - \E_{\Kcal^{(L)}(x)}[\sigma(\Zcal^{L}(x))^2] \right|\right)^p\right].
\end{aligned}
\]
For \(p\in\N\), and since \(\sigma\) is bounded, we use \textbf{Lemma \ref{lem:centered-average-moments}} to  get the upper bound
\begin{equation}\label{eq:HS_first bound}
    \E[\|A-K\|_{HS}^p]\lesssim\sum_{\alpha=1}^p
\left|
\frac{C_W^\alpha}{n_L^\alpha}
\sum_{i_1,\dots,i_\alpha=1}^{n_L}
\E\Bigl[\prod_{r=1}^\alpha \sigma( z^{(L)}_{i_k}(x) )^2\Bigr]
-
\E_{\Kcal^{(L)}(x)}[\sigma(\Zcal^{L}(x))^2]^\alpha
\right|.
\end{equation}
To conclude, we need to bound the right-hand side for \(p=2,4\). We write
\[
\begin{aligned}
    &\frac{C_W^\alpha}{n_L^\alpha} \sum_{i_1,\dots,i_\alpha=1}^{n_L} \E \left[ \prod_{k=1}^\alpha \sigma( z^{(L)}_{i_k}(x) )^2 \right]\\
    &= \sum_{j=0}^{L} \frac{C_W^\alpha}{n_L^\alpha} \sum_{i_1,\dots,i_\alpha=1}^{n_L} \left( \E \left[ \prod_{k=1}^\alpha \sigma( z^{(L; L-j)}_{i_k}(x) )^2 \right]- \E \left[ \prod_{k=1}^\alpha \sigma( z^{(L;L-j-1)}_{i_k}(x) )^2 \right] \right)\\
    &+\frac{C_W^\alpha}{n_L^\alpha} \sum_{i_1,\dots,i_\alpha=1}^{n_L} \E \left[ \prod_{k=1}^\alpha \sigma( z^{(L;1)}_{i_k}(x) )^2 \right]
\end{aligned}
\]
Since, \(\sigma\in C^{3\cdot 2^{L-1}}_b\), from \textbf{Theorem \ref{thm:DNN_Lindeberg_switching}}, and \textbf{Remark \ref{rem:Applicability}}, we get 
\begin{equation}\label{eq:first_corr_estimate}
    \left|\E \left[ \prod_{k=1}^\alpha \sigma( z^{(L; L-j)}_{i_k}(x) )^2 \right]- \E \left[ \prod_{k=1}^\alpha \sigma( z^{(L;L-j-1)}_{i_k}(x) )^2 \right]\right|\lesssim\frac{1}{\sqrt{n_{L-K-1}}}.
\end{equation}
Now, it is easy to see that  \textbf{Lemma \ref{lem:gaussian-hidden-layer-to-gp}}, implies that 
\begin{equation}\label{eq:second_corr_estimate}
    \left|\frac{C_W^\alpha}{n_L^\alpha} \sum_{i_1,\dots,i_\alpha=1}^{n_L} \E \left[ \prod_{k=1}^\alpha \sigma( z^{(L;1)}_{i_k}(x) )^2 \right]-\E_{\Kcal^{(L)}(x)}[\sigma(\Zcal^{L}(x))^2]^\alpha\right|\lesssim \sum_{k=1}^{L-1}\frac{1}{n_k}
\end{equation}
From \eqref{eq:first_corr_estimate} and \eqref{eq:second_corr_estimate}, we get the bound for \eqref{eq:HS_first bound}. This concludes the proof.
\end{proof}

The only thing left to prove is the following elementary lemma, which we used in the above proof 

\begin{lemma}\label{lem:centered-average-moments}
Let \(p\in\N\) be even, let \(n\in\N\), and let \(Y_1,\dots,Y_n\) be bounded real-valued random variables.
Set
\[
\overline Y_n:=\frac1n\sum_{i=1}^n Y_i,
\qquad
m:=\E[Y_1].
\]
Then there exists a constant \(C_p\), depending only on \(p\) and \(\sup_i\|Y_i\|_{L^\infty}\), such that
\[
\E\bigl[|\overline Y_n-m|^p\bigr]
\le
C_p
\sum_{\alpha=1}^p
\left|
\frac{1}{n^\alpha}
\sum_{i_1,\dots,i_\alpha=1}^{n}
\E\Bigl[\prod_{r=1}^\alpha Y_{i_r}\Bigr]
-
m^\alpha
\right|.
\]
\end{lemma}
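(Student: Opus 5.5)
The plan is to expand the $p$-th power with the binomial theorem and then regroup the terms so that each one is a difference of the form appearing on the right-hand side. Since $p$ is even, $|\overline Y_n-m|^p=(\overline Y_n-m)^p$, so
\[
\E\bigl[(\overline Y_n-m)^p\bigr]=\sum_{\alpha=0}^p\binom{p}{\alpha}(-m)^{p-\alpha}\,\E\bigl[\overline Y_n^{\,\alpha}\bigr],
\qquad
\E\bigl[\overline Y_n^{\,\alpha}\bigr]=\frac{1}{n^\alpha}\sum_{i_1,\dots,i_\alpha=1}^n\E\Bigl[\prod_{r=1}^\alpha Y_{i_r}\Bigr].
\]
Write $\E[\overline Y_n^{\,\alpha}]=m^\alpha+D_\alpha$, where $D_\alpha$ is the quantity inside the absolute value on the right-hand side, and $D_0=0$.

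Substituting, the pure terms $\sum_{\alpha}\binom{p}{\alpha}(-m)^{p-\alpha}m^\alpha=(m-m)^p=0$ cancel exactly. This is the only structural point of the argument. It leaves
\[
\E\bigl[(\overline Y_n-m)^p\bigr]=\sum_{\alpha=1}^p\binom{p}{\alpha}(-m)^{p-\alpha}D_\alpha .
\]
Next, bound $|m|\le M:=\sup_i\|Y_i\|_{L^\infty}$. Applying the triangle inequality then gives the claim with $C_p=\max_{\alpha}\binom{p}{\alpha}\max(1,M)^{p}$, say $C_p=2^p\max(1,M)^p$.

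There is no real obstacle. The one point to watch is that $m=\E[Y_1]$ is used as a common centering, while the $Y_i$ need not be identically distributed. The expansion above never requires $\E[Y_i]=m$, since any mismatch is absorbed into the $D_\alpha$ terms (for instance $D_1$). So the identity holds as written, without extra assumptions. Boundedness is used only to control $|m|^{p-\alpha}$ and to ensure all moments are finite.
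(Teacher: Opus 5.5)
Your proposal is correct and follows essentially the same route as the paper: expand $(\overline Y_n-m)^p$ binomially, use $\sum_{\alpha}\binom{p}{\alpha}(-m)^{p-\alpha}m^\alpha=(m-m)^p=0$ to rewrite the expectation as a combination of the differences $D_\alpha$, then apply the triangle inequality with $|m|\le\sup_i\|Y_i\|_{L^\infty}$. Your remark that the identity never uses $\E[Y_i]=m$, so the $Y_i$ need not be identically distributed, is a correct observation; the paper's proof does not use it either.
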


\begin{proof}
Since \(p\) is even, \(|\overline Y_n-m|^p=(\overline Y_n-m)^p\).
By the binomial formula,
\[
(\overline Y_n-m)^p
=
\sum_{\alpha=0}^p \binom{p}{\alpha}(-m)^{p-\alpha}\,\overline Y_n^\alpha.
\]
Taking expectations gives
\[
\E[(\overline Y_n-m)^p]
=
\sum_{\alpha=0}^p \binom{p}{\alpha}(-m)^{p-\alpha}\,\E[\overline Y_n^\alpha].
\]
Since
\[
\E[\overline Y_n^\alpha]
=
\frac{1}{n^\alpha}
\sum_{i_1,\dots,i_\alpha=1}^n
\E\Bigl[\prod_{r=1}^\alpha Y_{i_r}\Bigr],
\]
and \(\E[\overline Y_n^0]=1=m^0\), we may rewrite
\[
\E[(\overline Y_n-m)^p]
=
\sum_{\alpha=0}^p \binom{p}{\alpha}(-m)^{p-\alpha}
\left(
\frac{1}{n^\alpha}
\sum_{i_1,\dots,i_\alpha=1}^n
\E\Bigl[\prod_{r=1}^\alpha Y_{i_r}\Bigr]
-
m^\alpha
\right),
\]
because
\[
\sum_{\alpha=0}^p \binom{p}{\alpha}(-m)^{p-\alpha}m^\alpha=(m-m)^p=0.
\]
Therefore
\[
\E[|\overline Y_n-m|^p]
\le
\sum_{\alpha=0}^p \binom{p}{\alpha}|m|^{p-\alpha}
\left|
\frac{1}{n^\alpha}
\sum_{i_1,\dots,i_\alpha=1}^n
\E\Bigl[\prod_{r=1}^\alpha Y_{i_r}\Bigr]
-
m^\alpha
\right|.
\]
The term \(\alpha=0\) vanishes identically, and since the \(Y_i\) are bounded, \(|m|\) is bounded by \(\sup_i\|Y_i\|_{L^\infty}\).
This yields the claim.
\end{proof}

\appendix
\section{Numerical Simulations}

The aim of this section is to illustrate numerically the convergence behaviour predicted by Theorem~\ref{thm:W_2_bound}. We consider fully connected networks with \(L=3\) hidden layers, scalar output, sigmoid activation, zero biases, and Laplace-distributed weights in the hidden layers, normalized to have variance \(C_W/n_{l-1}\). The first layer is Gaussian, in accordance with our assumptions.

For the convergence plots we take the common hidden width
\[
n_1=n_2=n_3=n \in \{16,32,64,128,256,512,1024,2048,4096,8192\}.
\]
For each value of \(n\), we generate independent samples of the network output and compare them with samples from the corresponding Gaussian limit. The Kolmogorov distance is estimated from the empirical distribution functions, while the \(W_1\) and \(W_2\) distances are computed from the empirical one-dimensional Wasserstein distances obtained by sorting the samples. The variance of the Gaussian limit is evaluated through the recursive covariance formula defining the infinite-width limit.

Figure~\ref{fig:convergence_distances} shows the empirical decay of the Kolmogorov, \(W_1\), and \(W_2\) distances as the width increases. Figure~\ref{fig:distribution_overlay} compares the output distribution at a fixed large width with its Gaussian approximation. The numerical results are consistent with the quantitative convergence predicted by our theory.

To reduce Monte Carlo noise, the number of realizations was increased with the width, and each experiment was repeated several times.

More precisely, for \(n_{\min}=16\), we used \(M(n)=800\,(n/n_{\min})\) realizations and \(R=6\) independent repetitions.

\begin{figure}[htpb!]
    \centering
    \includegraphics[width=0.98\linewidth]{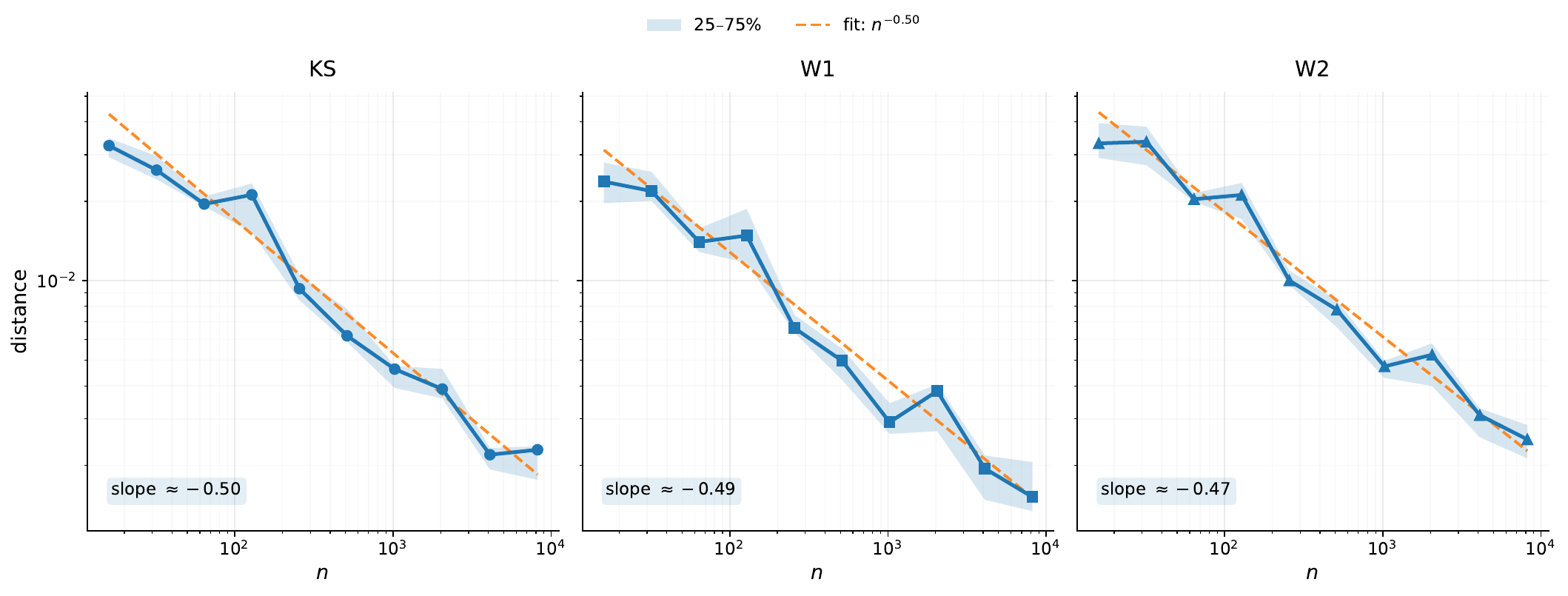}
    \caption{Empirical decay of the Kolmogorov, \(W_1\), and \(W_2\) distances as the hidden width increases.}
    \label{fig:convergence_distances}
\end{figure}

\begin{figure}[htpb!]
    \centering
    \includegraphics[width=0.65\linewidth]{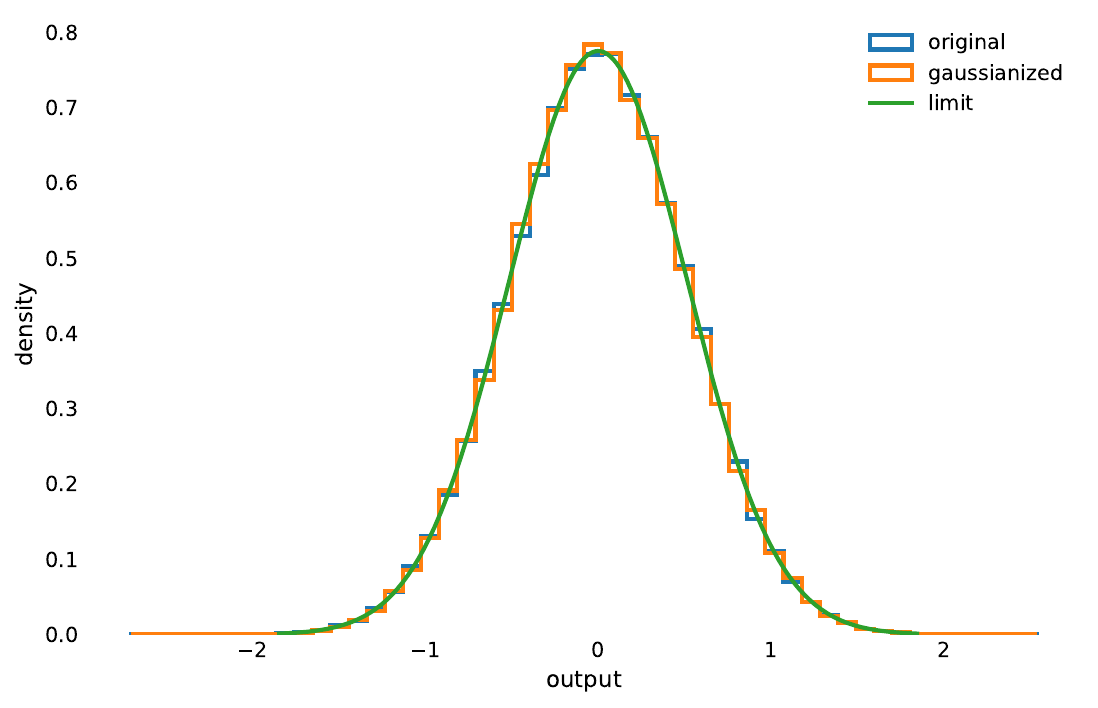}
    \caption{Comparison of the output distribution of the finite-width network with its Gaussian approximation at a fixed large width.}
    \label{fig:distribution_overlay}
\end{figure}

\section{Proof of \textbf{Theorem \ref{th:entropic-bound}}}\label{app:Cond_Gauss_proof}
Here we prove \textbf{Theorem \ref{th:entropic-bound}}. The proof  is inspired by \cite{trevisan2023widedeepneuralnetworks}, and a version of it has been proven in \cite{entr_bounds_cond_gaussian}.

\begin{remark}\label{rem:condi-gauss-explicit}
Let \(X\) be a centered random vector in \(\R^d\), and let \(\Fcal\) be a \(\sigma\)-field such that, conditionally on \(\Fcal\), the law of \(X\) is Gaussian with covariance matrix \(M\).
Then \(X\) has the same law as \(\sqrt M\,N\), where \(N\sim \mathcal N_d(0,I_d)\) is independent of \(M\).
\end{remark}

\begin{proof}[Proof of \Cref{th:entropic-bound}]
Since the value of \(W_2\) is bounded above by the cost of any coupling, we may realize \(F\) and \(G\) on the same probability space using the same Gaussian vector \(N\).
Then
\[
W_2(F,G)^2
\le
\E\|\sqrt A\,N-\sqrt K\,N\|^2
=
\E\|\sqrt A-\sqrt K\|_{HS}^2.
\]

Set
\[
\lambda:=\lambda(K)>0,
\qquad
E:=\{\|A-K\|_{op}\le \lambda/2\}.
\]
We split
\[
\E\|\sqrt A-\sqrt K\|_{HS}^2 = T_1+T_2,
\]
where
\[
T_1:=\E\!\left[\|\sqrt A-\sqrt K\|_{HS}^2\,\one_E\right],
\qquad
T_2:=\E\!\left[\|\sqrt A-\sqrt K\|_{HS}^2\,\one_{E^c}\right].
\]

On the event \(E\), a standard perturbation bound for the matrix square root gives
\[
\|\sqrt A-\sqrt K\|_{HS}
\le
\frac{1}{\sqrt\lambda}\|A-K\|_{HS}.
\]
Therefore
\[
T_1\le \frac{1}{\lambda}\E\|A-K\|_{HS}^2.
\]

On \(E^c\), the Powers--St\o rmer inequality implies
\[
\|\sqrt A-\sqrt K\|_{HS}^2\le \|A-K\|_{S_1}\le \sqrt d\,\|A-K\|_{HS}.
\]
Since on \(E^c\),
\[
\|A-K\|_{HS}\ge \|A-K\|_{op}>\frac{\lambda}{2},
\]
it follows that
\[
\one_{E^c}
\le
\left(\frac{2\|A-K\|_{HS}}{\lambda}\right)^3.
\]
Hence
\[
T_2
\le
\frac{8\sqrt d}{\lambda^3}\E\|A-K\|_{HS}^4.
\]
Combining the bounds for \(T_1\) and \(T_2\), we obtain
\[
W_2(F,G)^2
\le
\frac{1}{\lambda}\E\|A-K\|_{HS}^2
+
\frac{8\sqrt d}{\lambda^3}\E\|A-K\|_{HS}^4.
\]
Taking square roots and using \(\sqrt{x+y}\le \sqrt x+\sqrt y\) concludes the proof.
\end{proof}

\section*{Acknowledgements}
FG acknowledges support from Imperial College London through the Roth PhD scholarship, and his PhD supervisor Dan Crisan for his continuous guidance. SK and MR acknowledge the support of the project \emph{Noise in fluid dynamics and related models} funded by the MUR Progetti di Ricerca di Rilevante Interesse Nazionale (PRIN) Bando 2022 - grant 20222YRYSP. MR acknowledges the partial support of the project PNRR - M4C2 - Investimento 1.3, Partenariato Esteso PE00000013 - \emph{FAIR - Future Artificial Intelligence Research} - Spoke 1 \emph{Human-centered AI}, funded by the European Commission under the NextGeneration EU programme, and of the MUR Excellence Department Project awarded to the Department of Mathematics, University of Pisa, CUP I57G22000700001.

\bibliographystyle{amsalpha}
\bibliography{mybibliography}
\end{document}